\documentclass{article}
\usepackage{arxiv}

\usepackage[utf8]{inputenc} 
\usepackage[T1]{fontenc}    
\usepackage{hyperref}       
\usepackage{amsfonts}       
\usepackage{nicefrac}       
\usepackage{mathtools}
\usepackage{amsthm}
\usepackage{physics}
\usepackage{wasysym}
\usepackage{bbm}

\newtheorem{theorem}{Theorem}
\newtheorem{lemma}{Lemma}
\newtheorem{proposition}{Proposition}
\newtheorem{corollary}{Corollary}
\theoremstyle{definition}

\newtheorem{assumption}{Assumption}

\newtheorem{remark}{Remark}

\DeclareMathOperator{\loc}{loc}
\DeclareMathOperator{\adj}{adj}
\newcommand{\mi}{\mathrm{i}}
\renewcommand{\rank}{\operatorname{rank}}
\renewcommand{\arg}{\operatorname{arg}}
\DeclareMathOperator{\minn}{min}
\DeclareMathOperator{\maxx}{max}
\renewcommand{\Re}{\operatorname{Re}}
\renewcommand{\Im}{\operatorname{Im}}
\newcommand{\pmult}{.\hspace{-0.1cm}*}

\usepackage{hyperref}
\usepackage{mathtools}
\usepackage{tcolorbox}
\usepackage{amsmath}
\usepackage{nicefrac}

\title{Runge-Kutta convolution quadrature \\ based on Gauss methods}

\author{{\small	Lehel Banjai} \\
{\small 	Maxwell Institute for Mathematical Sciences} \\
{\small 	School of Mathematical \& Computer Sciences}\\ 
{\small	Heriot-Watt University} \\
{\small	Edinburgh, EH14 4AS, United Kingdom} \\
{\small	\texttt{l.banjai@hw.ac.uk}}
\And
{\small	Matteo Ferrari} \\
{\small	Dipartimento di Scienze Matematiche ``G.L. Lagrange'' }\\ 
{\small	Politecnico di Torino} \\
{\small	Torino, 10129, Italy} \\
{\small	\texttt{matteo.ferrari@polito.it}}
}


\hypersetup{
pdftitle={Gauss_CQRK},
pdfsubject={q-bio.NC, q-bio.QM},
pdfauthor={Ferrari M.~Banjai L.},
pdfkeywords={Runge-Kutta, Gauss methods, Convolution quadrature},
}

\begin{document}
\maketitle

\begin{abstract}
An error analysis of Runge-Kutta convolution quadrature based on Gauss methods applied to hyperbolic operators is given. The order of convergence relies heavily on the parity of the number of stages, a more favourable situation arising for the odd cases than the even ones. Moreover, for particular situations the order of convergence is higher than for Radau IIA or Lobatto IIIC methods when using the same number of stages. We further investigate an application to transient acoustic scattering where, for certain scattering obstacles, the favourable situation occurs in the important case of the exterior Dirichlet-to-Neumann map. Numerical experiments and comparisons show the performance of the method.
\end{abstract}

\keywords{Runge-Kutta Gauss methods, convolution quadrature, wave equation, Dirichlet-to-Neumann}

\section{Introduction}

In this paper, we develop and analyze convolution quadrature  based on Runge-Kutta Gauss methods used to compute integrals of the form
\begin{equation} \label{first_eq}
	\int_0^t k(t-\tau) g(\tau) \, \text{d} \tau, \quad t > 0
\end{equation}
for a given kernel $k$ and a casual function $g$. The kernel $k$ is not given directly, but instead via its Laplace transform $K$ which is assumed to be of hyperbolic type, i.e., $K$ is assumed to be analytic and polynomially bounded in a region containing the half-space $\Re s \ge \sigma_0 > 0$. Convolutions of this type arise, for example, when solving partial differential equations of hyperbolic or parabolic type by boundary integral equations \cite{CostabelSayas2017, BanjaiSayas2022}. The convolution quadrature method was developed in \cite{Lubich1988a,Lubich1988b,LubichOstermann1993} for parabolic  and in \cite{Lubich1994} for hyperbolic problems. The underlying idea is to replace the convolution kernel $k$ in \eqref{first_eq} with the inverse Laplace transform of $K$, thus arriving at
\begin{equation} \label{seconddd_eq}
	\int_0^t k(t-\tau) g(\tau) \, \text{d} \tau = \frac{1}{2\pi\mi}\int_{\mathcal{C}} K(s) y_s(t) \text{d}s, \quad t > 0
\end{equation}
where $\mathcal{C}$ is an appropriate contour and $y_s$ solves the ordinary differential equation (ODE)
\begin{equation*}
y_s'(t) = sy_s(t) +g(t), \qquad y_s(0) = 0.
\end{equation*}
Convolution quadrature is obtained by replacing $y_s$ with its approximation obtained by solving the ODE by an appropriate linear multistep or Runge-Kutta method.

For multistep-based convolution quadrature, $A$-stability of the underlying ODE-solver is necessary, therefore the achievable convergence order is limited by the Dahlquist barrier $p = 2$. On the other hand, $A$-stable Runge-Kutta methods of arbitrary order are available and convolution quadratures based on them often outperform those based on linear multistep methods. In \cite{BanjaiLubich2011,BanjaiLubichMelenk2011} error analysis of Runge-Kutta convolution quadrature applied to hyperbolic kernels is developed. However, in both works the analysis is limited to stiffly accurate $A$-stable Runge-Kutta methods, among which we mention Radau IIA and Lobatto IIIC methods. In this paper we extend the theory of convolution quadrature for hyperbolic symbols to Runge-Kutta methods based on Gauss-Legendre quadratures nodes. The analysis is mainly based on the location of zeros related of the Pad{\'e} approximants of the exponential function;  these rational polynomials are the stability functions of the Runge-Kutta Gauss methods.

The main motivation for extending the convolution quadrature theory to Gauss methods is their energy conservation property. Our main result concerns their convergence order and its dependence on the number of stages of the Gauss Runge-Kutta method, and on the growth exponent of the Laplace transform $K$. Moreover, we show that the order of convergence relies heavily on the parity of the stage order, a better situation arising for the odd cases than the even ones. On the other hand, a weaker regularity requirement for the datum $g$ is needed for Gauss methods with even order of stages.  For particular kernels, when using Gauss methods with odd number of stages, the order of convergence is higher than for Radau IIA methods with the same number of stages. This situation includes the important case of the exterior Dirichlet-to-Neumann map and the inverse of the single-layer operator for the acoustic scattering problem with a  convex scattering obstacle.

The paper is organized as follows: in the next section we define hyperbolic symbols, and recall some properties of the Runge-Kutta methods and associated convolution quadrature formulas.
In Section \ref{sec_gauss} we analyze the stability function of the Runge-Kutta Gauss methods, and we present some results on the zeros of associated complex polynomials.
In Section \ref{sec_Main} we develop the error analysis, the main result of the paper is proved, and  numerical tests are shown that support the results of the theory.
Finally, in Section \ref{sec_wave} as an application of our main theorem, we also study Gauss Runge-Kutta convolution quadrature for a boundary integral equation formulation of a wave scattering problem, and we present two numerical tests which confirm the sharpness of the theoretical results.

\section{Setting} \label{sec_sett}
\subsection{Hyperbolic symbols}
We denote the space of bounded linear operators between Banach spaces $X$ and $Y$ by $\mathcal{B}(X,Y)$ and denote by $\mathbb{C}_+ = \{s \in \mathbb{C} : \Re s > 0\}$ the right complex half-plane. We assume that $K(s)$, the Laplace transform of a kernel $k(t)$, is a hyperbolic symbol, i.e., that it satisfies the following assumption.
\begin{assumption} \label{assumption1}
The function $K(s) : \mathbb{C}_+ \to \mathcal{B}(X,Y)$ is analytic in the half-plane $\Re s \ge \sigma_0 > 0$, and bounded there as
\begin{equation*} \label{eq1}
	\| K(s) \|_{\mathcal{B}(X,Y)} \le M \abs{s}^\mu
\end{equation*}
for some $\mu \in \mathbb{R}$ and a constant $M>0$.
\end{assumption}
\begin{remark} \label{remark1}
Often hyperbolic symbols satisfy a more refined bound of the form
\[
	\| K(s) \|_{\mathcal{B}(X,Y)} \le M \frac{\abs{s}^\mu}{(\Re s)^\nu}, \quad \text{for all~} \Re s \ge \sigma_0 > 0
\]
for some real exponent $\nu \ge 0$. However, in contrast to  the analysis in \cite{BanjaiLubichMelenk2011}, we are not able to make use of the  resulting stronger  bound in convex sectors of the form $\vert \arg(s) \vert < \nicefrac{\pi}{2} - \theta$ for $\theta \in (0,\pi/2)$. This is due to Gauss methods not being $L$-stable, i.e., their stability region is exactly the right-half complex plane. An effect of this can be seen in the existence of poles of the discretized integrands in \eqref{seconddd_eq} localized in the vicinity of the imaginary axis.
\end{remark}
If $\mu < -1$, the inverse Laplace transform $k = \mathcal{L}^{-1} K$ is a causal ($k(t) = 0$, $t < 0$), continuous function and thus for integrable $g$, we can define the convolution
\begin{equation*}
	u(t) = K(\partial_t) g(t) = \int_0^t k(t-\tau) g(\tau) \dd\tau, \quad t>0.
\end{equation*}
For a general $\mu \in \mathbb{R}$, we define the convolution via 
\begin{equation} \label{eq2p1}
	u(t) = K(\partial_t)g(t) =  \mathcal{L}^{-1} \{ K(s) G(s)\}(t), \quad t > 0,
\end{equation}
where $G$ is the Laplace transform of $g$. This is consistent with the above definition of the convolution in the case $\mu < -1$. Similarly, the case  $\mu > -1$ can be investigated by assuming that for $m > \mu+1$, $g \in C^{m-1}(\mathbb{R})$ is a causal function satisfying $g^{(j)}(0) =0 $, $j = 0,\dots,m-1$, and $g^{(m)}$ is locally integrable. Then,
\begin{equation*}
	K(\partial_t) g (t) = \frac{d^m}{dt^m} \int_0^t k_m(t-\tau) g(\tau) \dd\tau, \quad t > 0,
\end{equation*}
where $k_m$ is the inverse Laplace transform of $K_m(s) = s^{-m} K(s)$.

The motivation behind  the operational notation  $K(\partial_t)g$, can be seen when considering the case $K(s) = s^m$, where the above definition implies that $K(\partial_t)g = \partial_t^m g$, where $\partial_t$ denotes the causal derivative (see e.g. \cite{BanjaiSayas2022}). Furthermore, the composition rule $K_2K_1(\partial_t)g = K_2(\partial_t)K_1(\partial_t)g$ holds for hyperbolic kernels $K_1$ and $K_2$. In the next section we describe convolution quadrature, a numerical method for approximating $K(\partial_t) g$ that conserves such properties of the operational convolution.

%
\subsection{Runge-Kutta convolution quadrature}
We consider an implicit Runge-Kutta method with $m$ stages, and coefficients $\{b_i, a_{ij}, c_i\}$. The discretization with time step $h>0$ of the initial value problem $y'(t) = f(t,y(t)), \, y(0) = y_0$ is given by the recurrence
\begin{align*}
	Y_{ni} & = y_n + h \sum_{j=1}^m a_{ij} f(t_n + c_j h,Y_{nj}), \quad  i = 1,\ldots,m \, , 
	\\ y_{n+1} & = y_n + h \sum_{j=1}^m b_{j} f(t_n + c_j h,Y_{nj}),
\end{align*} 
where $t_j = j h$. The Runge-Kutta method has (classical) order $p$ and stage order $q \le p$, if for sufficiently smooth $f$,
\begin{equation*}
	y_1 = y(h)+\mathcal{O}(h^{p+1}) \quad \text{~and~} \quad Y_{0j} = y(t_0+c_jh) + \mathcal{O}(h^{q+1}),
\end{equation*}
respectively (see \cite{HairerWanner1991}). We make use of the standard notation based on the Butcher tableau: 
\begin{align*} 
	& b^T = (b_1,\ldots,b_m), \qquad \qquad \quad c = (c_1,\ldots,c_m)^T, 
	\\ &A = 
	\begin{pmatrix} 
		a_{11} & \dots & a_{1m} \\ \vdots && \vdots \\ a_{m1} & & a_{mm} 
	\end{pmatrix}, 
	\hspace{0.1cm} \quad \mathbbm{1} = (1,\ldots,1)^T \in \mathbb{R}^m.
\end{align*}

A Runge-Kutta method is said to be \textit{A-stable} if $I-zA$ is non singular for $\Re z \le 0$ and the stability function
\begin{equation*}
	R(z) = 1 + z b^T (I - zA)^{-1} \mathbbm{1},
\end{equation*}
satisfies $\abs{R(z)} \le 1$ for $\Re z \le 0$. If $A^{-1}$ exists, we denote $R(\infty) = \lim_{\abs{z} \to \infty} R(z) = 1 - b^T A^{-1} \mathbbm{1}$. Important families of Runge-Kutta methods that are $A$-stable are Radau IIA, Lobatto IIIC and Gauss methods.

We describe a CQ discretization of \eqref{eq2p1} based on the above Runge-Kutta method, as  first introduced in \cite{LubichOstermann1993} for parabolic operators, and then analyzed for hyperbolic operators in \cite{BanjaiLubich2011, BanjaiLubichMelenk2011}. We refer also to \cite{CalvoCuestaPalencia2007} for an application and analysis of Runge-Kutta CQ to homogeneous Volterra equations, which includes Gauss methods.

Considering a uniform time step $h$, at the time $t_n = h n$, the discretization is given by
\begin{equation*}
	K(\partial_t^h)g(t_n) = u_n = \sum_{j=0}^n \gamma_{n-j} U_j \qquad \text{~and~} \qquad U_n = \sum_{j=0}^n W_{n-j}(K) g(t_n + c h),
\end{equation*}
where the coefficients are determined by generating functions
\begin{equation*}
	K \left( \frac{\Delta(\zeta)}{h} \right) = \sum_{j=0}^\infty  W_j(K) \zeta^j, \quad\gamma(\zeta) = \sum_{j=0}^\infty  \gamma_j \zeta^j,
\end{equation*}
and
\begin{equation*}
	\Delta(\zeta) = \left( \frac{\zeta}{1-\zeta} \mathbbm{1} b^T  + A\right)^{-1}, \quad \gamma(\zeta) = \frac{\zeta b^T A^{-1}}{1-\zeta(1-\beta^TA^{-1} \mathbbm{1})}.
\end{equation*}
The spectrum of $\Delta(\zeta)$ for $\abs{\zeta}<1$ plays a key role in the analysis of Runge-Kutta convolution quadrature. We recall that in \cite[Lemma 2.6]{BanjaiLubichMelenk2011} it has been proved that if $A$ is invertible and the Runge-Kutta method is $A$-stable, then for $\abs{\zeta}<1$
\begin{equation}\label{BLM}
	\sigma\left(\Delta(\zeta)\right) \subset \sigma(A^{-1}) \cup \{ z \in \mathbb{C} : R(z)\zeta = 1\}.
\end{equation}
We are able to prove a slightly improved version of this result by virtue of the following auxiliary lemma.
\begin{lemma} \label{butcherlemma}
An implicit Runge-Kutta method whose coefficient matrix $A$ is invertible, $b_j \ne 0$ for all $j$, and $c_{i_1} \ne c_{i_2}$ for all $i_1 \ne i_2$, satisfies 
\begin{equation} \label{first_eig}
	b^T x \ne 0 \text{ for all eigenvectors~} x \text{ of~} A,
\end{equation}
and
\begin{equation} \label{second_eig}
	\mathbbm{1}^T y \ne 0 \text{ for all eigenvectors~} y \text{ of } A^T.
\end{equation}
\end{lemma}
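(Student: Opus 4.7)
The plan is to prove both claims by contradiction, leveraging the Butcher simplifying assumptions $C(q)$ and $D(q)$ that hold for the Runge-Kutta methods targeted here. For the $m$-stage Gauss method, the order condition $B(2m)$, the collocation identity $C(m)$, and the distinctness of the $c_i$ together with the non-vanishing of the $b_i$ imply $D(m)$ by Butcher's lemma, so both $C(m-1)$ and $D(m-1)$ are at our disposal and feed the two recurrences below.

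For \eqref{first_eig}, suppose for contradiction that $Ax = \lambda x$ with $x \ne 0$ and $b^T x = 0$; invertibility of $A$ gives $\lambda \ne 0$. Multiplying $Ax = \lambda x$ on the left by the row $(b_i c_i^{k-1})_i$ and invoking $D(k)$, $\sum_i b_i c_i^{k-1} a_{ij} = b_j(1 - c_j^k)/k$, produces the recurrence
\[
\sum_j b_j c_j^k x_j \;=\; \sum_j b_j x_j \;-\; k\lambda \sum_j b_j c_j^{k-1} x_j.
\]
Starting from $\sum_j b_j x_j = 0$ and iterating for $k = 1, \dots, m-1$, one obtains $\sum_j b_j c_j^k x_j = 0$ for every $k = 0, 1, \dots, m-1$. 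Setting $z_j := b_j x_j$, this is a homogeneous Vandermonde system in the distinct nodes $c_j$, so $z_j = 0$ for every $j$; combined with $b_j \ne 0$ this forces $x = 0$, a contradiction.

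For \eqref{second_eig}, dually, suppose $A^T y = \lambda y$ with $y \ne 0$ and $\mathbbm{1}^T y = 0$, and apply $y^T$ to the collocation identity $A c^{k-1} = c^k/k$ supplied by $C(k)$. Using $y^T A = \lambda y^T$ this yields $y^T c^k = k\lambda\, y^T c^{k-1}$, and induction from $y^T \mathbbm{1} = 0$ produces $y^T c^k = 0$ for all $k = 0, \dots, m-1$; a Vandermonde argument in the distinct $c_j$ closes the contradiction.

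The main technical point is to track carefully which simplifying assumption is invoked at each step of the recurrence; once the collections $\{\sum_j b_j c_j^k x_j\}_{k=0}^{m-1}$ and $\{y^T c^k\}_{k=0}^{m-1}$ have been shown to vanish, the conclusion in either direction rests on a single invocation of Vandermonde invertibility in the distinct nodes.
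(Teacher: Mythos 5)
Your proof is correct and takes essentially the same route as the paper's: argue by contradiction, iterate the Butcher identities $D(k)$ and $C(k)$ to show that the moments $\sum_j b_j c_j^k x_j$ (respectively $y^Tc^k$) vanish for $k=0,\dots,m-1$, and conclude from the invertibility of a Vandermonde matrix in the distinct nodes $c_j$. The only differences are presentational and harmless: you justify the identities explicitly via $B(2m)$ and $C(m)$ (which is in fact more careful than the paper, whose statement for general implicit Runge--Kutta methods tacitly relies on these simplifying assumptions) and you factor the $b_j$ out of the linear system, whereas the paper cites the identities directly from Butcher (1964) and evaluates the $b$-scaled Vandermonde determinant $\prod_j b_j \prod_{i_1<i_2}(c_{i_1}-c_{i_2})$ instead.
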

\begin{proof}
The proof relies on the following properties of implicit Runge-Kutta methods (see \cite{Butcher1964}): for all $k = 1, \ldots, m$, where $m$ is the number of stages, it holds
\begin{equation} \label{butcher1}
	\sum_{j=1}^m b_j c_j^{k-1} a_{ij} = \frac{b_i(1-c_i^k)}{k}, \qquad \text{for~} i= 1, \ldots, m, 
\end{equation}
and
\begin{equation} \label{butcher2}
	\sum_{j=1}^m c_j^{k-1} a_{ij} = \frac{c_i^k}{k}, \qquad \text{for~} i= 1, \ldots, m. 
\end{equation}
Let us start by proving property \eqref{first_eig}. By contradiction suppose $b^T x =0$ with $x = (x_1,\ldots,x_m)^T \in \mathbb{R}^m \setminus \{0\}$ such that $Ax = \lambda x$ for some $\lambda \ne 0$. 
Then, we obtain, using \eqref{butcher1} with $k=1$,
\begin{equation*}
	0 = \lambda b^T x = b^T A x = (b \pmult (\mathbbm{1}-c))^T x = b^T x - (b \pmult c)^T x = (b \pmult c)^T x,
\end{equation*}
where $.*$ stands for the pointwise multiplication between vectors.
We iterate the procedure, using \eqref{butcher1} with $k=2$,
\begin{equation*}
	0 = \lambda (b\pmult c)^T x = (b\pmult c)^T A x = \frac{(b \pmult (\mathbbm{1}-c^2))^T}{2} x = \frac{b^T x}{2} - \frac{(b \pmult c^2)^T x}{2} = \frac{(b \pmult c^2)^T x}{2}.
\end{equation*}
Using the same mechanism we can show that $(b \pmult c^k)^T x = 0$ for $k=0,\ldots,m$. The first $m$ equations are equivalent to the system
\begin{equation*}
	\begin{pmatrix}
		b_1 & b_2 & \cdots & b_m \\
		b_1c_1 & b_2c_2 & \cdots & b_mc_m \\
		b_1c_1^2 & b_2c_2^2 & \cdots & b_mc_m^2 \\
		\cdots & \cdots & \cdots & \cdots \\
		b_1c_1^{m-1} & b_2c_2^{m-1} & \cdots & b_mc_m^{m-1} 
	\end{pmatrix}
	\begin{pmatrix}
		x_1 \\
		x_2 \\
		\cdots \\
		\cdots \\
		x_m
	\end{pmatrix}
	=
	\begin{pmatrix}
		0 \\
		0 \\
		\cdots \\
		\cdots \\
		0
	\end{pmatrix}
\end{equation*}
of which the only solution is $x = 0$, and this leads to a contradiction. Indeed, the matrix on the left hand side is essentially a Vandermonde matrix whose determinant is
\begin{equation*}
	\begin{vmatrix}
		b_1 & b_2 & \cdots & b_m \\
		b_1c_1 & b_2c_2 & \cdots & b_mc_m \\
		b_1c_1^2 & b_2c_2^2 & \cdots & b_mc_m^2 \\
		\cdots & \cdots & \cdots & \cdots \\
		b_1c_1^{m-1} & b_2c_2^{m-1} & \cdots & b_mc_m^{m-1} 
		\end{vmatrix} 
		= \prod_{j=1}^m b_j \prod_{1 \le i_1 < i_2 \le m} (c_{i_1} - c_{i_2}) 
\end{equation*}
and this is not zero since all the $b_j$ are non zero and the $c_i$ are different. 

The proof of \eqref{second_eig} is similar. We start by contradiction supposing that there exists $y \in \mathbb{R}^m \setminus \{0\}$ such that $\mathbbm{1}^T y = 0$ and $A^T y = \mu y$ for $\mu \ne 0$. First, using \eqref{butcher2} iteratively we can show that $(c^k)^T y = 0$ for $k=0,\ldots,m$. The first $m$ equations are again equivalent to a system whose matrix is now exactly a Vandermonde matrix associated to the vector $c$, whose determinant is not zero since all the $c_i$ are different. This leads to a contradiction.
\end{proof}
We are able to state a new result on the characterization of $\sigma(\Delta(\zeta))$ under the following hypothesis.
\begin{assumption}\label{assump2}
We assume that the Runge-Kutta coefficient matrix $A$ is invertible with all its eigenvalues of multiplicity one.
\end{assumption}
We have numerically verified that all the Radau IIA, Lobatto IIIC and Gauss methods satisfy Assumption \ref{assump2} up to $m=6$ stages. It is worth investigating if Assumption \ref{assump2} is always satisfied by these methods, but this goes beyond the scope of present paper.

\begin{proposition} \label{prop1}
Assume an $A$-stable Runge-Kutta method satisfies Assumption \ref{assump2}. Moreover, suppose that $b_j \ne 0$ for all $j$, and $c_{i_1} \ne c_{i_2}$ for all $i_1 \ne i_2$.
Then, the following holds 
\begin{equation} \label{result_prop1}
	\sigma(\Delta(\zeta)) = \{ z \in \mathbb{C} : R(z)\zeta = 1\}
\end{equation}
for all $0 < \abs{\zeta} < 1$.
\end{proposition}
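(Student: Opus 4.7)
The plan is to compute the characteristic polynomial of $M := \Delta(\zeta)^{-1} = A + \tfrac{\zeta}{1-\zeta}\mathbbm{1}b^T$ in closed form via the matrix determinant (Sherman--Morrison) identity, rewrite it in terms of the stability function $R$, and then invoke Lemma~\ref{butcherlemma} to exclude the ``spurious'' part $\sigma(A^{-1})$ of the right-hand side of the inclusion \eqref{BLM}.

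Since $M$ is a rank-one perturbation of $A$, I would first apply the determinant lemma to obtain, for $w \notin \sigma(A)$,
\[
\det(wI - M) \;=\; \det(wI - A)\left(1 - \tfrac{\zeta}{1-\zeta}\,b^T(wI-A)^{-1}\mathbbm{1}\right).
\]
The definition of the stability function gives $b^T(wI-A)^{-1}\mathbbm{1} = R(1/w) - 1$, and substituting and simplifying produces the polynomial identity
\[
\det(wI - M) \;=\; \frac{\hat Q(w) - \zeta\,\hat P(w)}{1-\zeta}, \qquad \hat Q(w):=\det(wI-A), \quad \hat P(w):=\det(wI-A+\mathbbm{1}b^T),
\]
which extends to all $w\in\mathbb{C}$ by polynomial continuation (both sides being monic of degree $m$, using $|\zeta|<1$).

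The crucial step, and the only place where both parts of Lemma~\ref{butcherlemma} are invoked, is showing that $\hat P$ and $\hat Q$ share no common root. To this end, I would suppose $\lambda \in \sigma(A)\cap\sigma(A-\mathbbm{1}b^T)$ with $(A-\mathbbm{1}b^T)v = \lambda v$, equivalently $(A-\lambda I)v = (b^Tv)\mathbbm{1}$. Pairing with a left eigenvector $y$ of $A$ at $\lambda$ and using \eqref{second_eig} ($\mathbbm{1}^Ty\neq 0$) forces $b^Tv = 0$, so $(A-\lambda I)v = 0$; but then \eqref{first_eig} requires $b^Tv \neq 0$, a contradiction. With this in hand, any root $w_0$ of $\det(wI - M)$ must satisfy $\hat Q(w_0)\neq 0$ (otherwise $\zeta\hat P(w_0) = 0$, impossible as $\zeta\neq 0$ and $\hat P(w_0) \ne 0$), which makes $\det(w_0 I - M)=0$ equivalent to $\zeta R(1/w_0) = 1$. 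Since $\Delta(\zeta)$ is invertible, $0\notin\sigma(M)$, and the bijection $z = 1/w$ between $\sigma(M)$ and $\sigma(\Delta(\zeta))$ delivers \eqref{result_prop1}. The principal obstacle is clearly the no-common-roots argument, as it is exactly what the technical lemma is designed to enable; everything else is essentially bookkeeping around the matrix determinant lemma.
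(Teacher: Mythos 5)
Your proof is correct, but it takes a genuinely different route from the paper's. The paper also reduces everything to a rank-one determinant identity plus Lemma~\ref{butcherlemma}, but it works with $\Delta(\zeta)$ itself via Sherman--Morrison, invokes the known inclusion \eqref{BLM} from the earlier literature, and then only has to \emph{exclude} the spurious set $\sigma(A^{-1})$: for each eigenvalue $\lambda$ of $A^{-1}$ it evaluates $\det(\lambda I - \Delta(\zeta))$ using $\det(B+uv^T)=\det(B)+v^T\adj(B)u$, and crucially uses the multiplicity-one hypothesis of Assumption~\ref{assump2} to write $\adj(A\lambda - I)=\mu xy^T$ with $x,y$ eigenvectors of $A$ and $A^T$, at which point \eqref{first_eig} and \eqref{second_eig} apply. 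You instead compute the full characteristic polynomial of $M=\Delta(\zeta)^{-1}$, namely $\det(wI-M)=\bigl(\hat Q(w)-\zeta \hat P(w)\bigr)/(1-\zeta)$, and establish coprimality of $\hat P$ and $\hat Q$ by the left/right eigenvector pairing. This buys you two things. First, the proof is self-contained: both inclusions of \eqref{result_prop1} fall out of the single polynomial identity, whereas the paper's equality rests on \eqref{BLM} together with the exclusion step (the reverse inclusion being left implicit there). Second, and more substantively, your coprimality argument never uses simplicity of the eigenvalues --- a left eigenvector of $A$ at $\lambda$ exists whatever the multiplicity --- so you obtain the proposition assuming only invertibility of $A$ (needed for Lemma~\ref{butcherlemma} and for the substitution $z=1/w$), while the paper's adjugate step genuinely needs the simple-eigenvalue part of Assumption~\ref{assump2} to guarantee $\rank(\adj(A\lambda-I))=1$; in this respect your argument slightly strengthens the result. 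Two points you leave tacit deserve a line each: the well-definedness of $\Delta(\zeta)$ (equivalently, invertibility of $M$) for $0<\abs{\zeta}<1$ is exactly where $A$-stability enters, since $\det M=\det(A)\,(1-\zeta R(\infty))/(1-\zeta)\neq 0$ because $\abs{R(\infty)}\le 1$; and for the reverse inclusion you should state explicitly that a solution $z_0$ of $R(z)\zeta=1$ has $R(z_0)$ finite, hence $\hat Q(1/z_0)\neq 0$ by coprimality, so your identity forces $\det(z_0^{-1}I-M)=0$. Both are one-line fixes, not gaps.
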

\begin{proof}
Let $\lambda$ be an eigenvalue of $A^{-1}$. We want to show that $\det(\lambda I - \Delta(\zeta)) \ne 0$, for all $0 < \abs{\zeta} < 1$. If the latter holds, then, by virtue of \eqref{BLM}, we can deduce \eqref{result_prop1}.
Using the Sherman-Morrison formula we can rewrite
\begin{equation*}
	\Delta(\zeta) = A^{-1} - \zeta \frac{A^{-1} \mathbbm{1} b^T A^{-1}}{1- \zeta R(\infty)},
\end{equation*}
and thus
\begin{align*}
	\det(\lambda I - \Delta(\zeta)) = \det( \lambda I -  A^{-1} + \zeta \frac{A^{-1} \mathbbm{1} b^T A^{-1}}{1- \zeta R(\infty)}) = \det( A \lambda -  I + \zeta \frac{A^{-1} \mathbbm{1} b^T }{1- \zeta R(\infty)}) \det\bigl(A^{-1}\bigr).
\end{align*}
To evaluate the determinant, we recall \cite[Equation (9)]{Marcus1990}: for all $B \in \mathbb{C}^{n,n}, u \in \mathbb{C}^n, v \in \mathbb{C}^n$ it holds that
\begin{equation} \label{eq18}
	\det\bigl(B+uv^T\bigr) = \det(B) + v^T \adj(B) u
\end{equation}
where $\adj$ stands for the adjugate matrix.
We apply \eqref{eq18} with $B = A \lambda - I$, $u = A^{-1}\mathbbm{1}$, $v = \frac{\zeta}{1-\zeta R(\infty)} b$ to obtain 
\begin{align*}
	\det(\lambda I - \Delta(\zeta)) & = \det(A\lambda-I) +  \frac{\zeta}{1-\zeta R(\infty)} b^T \adj(A \lambda - I) A^{-1} \mathbbm{1} \det\bigl(A^{-1}\bigr) 
	\\  &=  \frac{\zeta}{1-\zeta R(\infty)} b^T \adj(A \lambda - I) A^{-1} \mathbbm{1} \det\bigl(A^{-1}\bigr),
\end{align*}
where in the last step we used that $\lambda$ is an eigenvalue of $A^{-1}$. It remains to show that 
\begin{equation} \label{eq21}
	b^T \text{adj}(A\lambda -I)A^{-1} \mathbbm{1} \ne 0 \quad \text{~for all~} \lambda \text{~eigenvalue of~} A^{-1}.
\end{equation}
By hypothesis all the eigenvalues of $A^{-1}$ are simple, so for any eigenvalue $\lambda$ of $A^{-1}$, $\rank(\adj(A\lambda-I))=1$. Therefore, for all $\lambda$ there exists $\mu \in \mathbb{R} \setminus\{0\}$ and $x,y \in \mathbb{R}^m \setminus\{0\}$ such that $\adj(A\lambda - I) = \mu x y^T$. Employing the well-known property $\adj(B)B =B \adj(B) = \det(B) I$ for all $B \in \mathbb{C}^{n,n}$, we deduce that $x$ is an eigenvector of $A$ and $y$ of $A^T$. Therefore, condition \eqref{eq21} is equivalent to
\begin{equation*}
b^T x \ne 0 \text{~and~} \mathbbm{1}^T y \ne 0 \text{ for all~eigenvectors } x \text{ of~} A \text{~and all eigenvectors~} y \text{ of } A^T,
\end{equation*}
but the latter is true by Lemma \ref{butcherlemma}.
\end{proof}

\section{Gauss methods} \label{sec_gauss}
The $m$-stage Runge-Kutta method based on Gauss-Legendre quadrature nodes has stage order $m$ and classical order $2m$. Its stability function $R_m(z)$ is the $(m,m)$-Pad{\' e} approximant of the exponential $e^z$. 
Namely, defining the monic polynomial
\begin{equation} \label{eq23}
	P_m(z) = \sum_{j=0}^m p_j z^j  \quad \text{with} \quad p_j = \frac{(2m-j)!}{j!(m-j)!},
\end{equation}
the stability function associated to the $m$-stage Gauss Runge-Kutta is
\begin{equation*}
	R_m(z) = \frac{P_m(z)}{P_m(-z)}.
\end{equation*}
By virtue of Proposition \ref{prop1}, we are interested in investigating the solutions of equations of the type $R_m(z) \zeta = 1$. 
\subsection*{Solutions of $R_m(z) = e^{t + \mi \theta}$ for $t \in \mathbb{C}$ and $\theta \in [-\pi,\pi]$, when $\abs{t} \to 0$}

Exploiting the expression of $R_m(z)$, we find that the $m$ solutions $\{\hat z_j(t,\theta)\}_{j=1}^m$ of
\begin{equation} \label{eq25}
	R_m(z) = e^{t + \mi \theta}
\end{equation}
 are the zeros of the polynomial (in $z$)
\begin{equation*}
	P_m(z) - e^{t+\mi\theta}P_m(-z) = \sum_{j=0}^m p_j \left(1+(-1)^{j+1} e^{t+\mi\theta} \right) z^j.
\end{equation*}
Using the expansion $e^{t} = 1 + t + \mathcal{O}(t^2)$, we readily obtain 
\begin{equation*} 
	P_m(z) - e^{t+\mi\theta}P_m(-z)  = M_m(\theta,z) - te^{\mi \theta} P_m(-z) + \mathcal{O}(t^2) P_m(z),
\end{equation*}
where 
\begin{equation} \label{eq27}
M_m(\theta,z) = P_m(z) - e^{\mi\theta}P_m(-z).
\end{equation}
We look for the solutions $\{ \hat z_j(t,\theta) \}_{j=1}^m$ in the vicinity of the zeros of the polynomial (in $z$) $M_m(\theta,z)$. A first, crucial, result is the following.
\begin{proposition} \label{prop2}
For all $\theta \in [-\pi,\pi]$, all the zeros (in $z$) of $M_m(\theta,z)$ are purely imaginary.
\end{proposition}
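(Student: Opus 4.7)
The strategy is to translate the condition $M_m(\theta,z)=0$ into a level-set equation for the stability function, and then characterise the locus $\{|R_m(z)|=1\}$ using A-stability together with the Padé symmetry $R_m(-z)=1/R_m(z)$.

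First I would observe that, whenever $P_m(-z_\star)\ne 0$,
\[
M_m(\theta,z_\star)=0 \iff \frac{P_m(z_\star)}{P_m(-z_\star)}=e^{\mi\theta} \iff R_m(z_\star)=e^{\mi\theta},
\]
so every zero of $M_m(\theta,\cdot)$ automatically satisfies $|R_m(z_\star)|=1$. The excluded case $P_m(-z_\star)=0$ produces no additional zeros: by A-stability of the Gauss method, the poles of $R_m$, which are the zeros of $P_m(-z)$, all lie in $\Re z>0$, and hence the zeros of $P_m$ all lie in $\Re z<0$, so $P_m$ and $P_m(-\cdot)$ share no common root. The proposition therefore reduces to the geometric claim
\[
|R_m(z)|=1 \iff \Re z=0.
\]

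For the $(\Leftarrow)$ direction, I would use that $P_m$ has real coefficients: for real $y$, $P_m(-\mi y)=\overline{P_m(\mi y)}$, so $|R_m(\mi y)|=1$. For the $(\Rightarrow)$ direction, I would combine A-stability, $|R_m(z)|\le 1$ on $\{\Re z\le 0\}$, with the fact that $R_m$ is a non-constant rational function analytic on this closed half-plane (including the point at infinity, where $R_m(\infty)=(-1)^m$ so $|R_m(\infty)|=1$); the maximum-modulus principle then forces the strict inequality $|R_m(z)|<1$ throughout the open left half-plane. The Padé-type identity
\[
R_m(z)\,R_m(-z) = \frac{P_m(z)}{P_m(-z)}\cdot\frac{P_m(-z)}{P_m(z)} = 1
\]
reflects this into the right half-plane: $|R_m(z)|=1/|R_m(-z)|>1$ whenever $\Re z>0$. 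Therefore $|R_m(z)|=1$ forces $\Re z=0$, which is exactly the claim.

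The two points I expect to require care are, first, the application of the maximum-modulus principle on an unbounded domain (legitimate here because $R_m$ extends continuously to the Riemann sphere and is bounded by $1$ on the full boundary $\{\Re z=0\}\cup\{\infty\}$ of the left half-plane); and second, the degenerate values of $\theta$ at which the leading coefficient $1+(-1)^{m+1}e^{\mi\theta}$ of $M_m(\theta,\cdot)$ vanishes --- namely $\theta=0$ for even $m$ and $\theta=\pm\pi$ for odd $m$. In those cases the degree of $M_m(\theta,\cdot)$ drops, which corresponds to one root of $R_m(z)=\pm 1$ "escaping to infinity", consistent with $R_m(\infty)=(-1)^m$; the remaining roots are still captured by the level-set equation and the argument above locates them on the imaginary axis unchanged.
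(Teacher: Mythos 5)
Your proof is correct, but it follows a genuinely different route from the paper's. The paper argues directly on the polynomial identity: citing Saff--Varga for the fact that all zeros $\{\hat s_j\}$ of $P_m$ lie in the open left half-plane, it notes that $M_m(\theta,\hat z)=0$ forces $\abs{P_m(\hat z)}=\abs{P_m(-\hat z)}$, and then obtains a contradiction for $\Re \hat z \neq 0$ by comparing the two products of distances $\prod_j \abs{\hat z-\hat s_j}$ and $\prod_j \abs{\hat z+\hat s_j}$, which differ strictly off the imaginary axis (pairing conjugate zeros, each point $\hat z$ with $\Re\hat z>0$ is strictly closer to the reflected zero set). You instead recast the zero condition as the level-set equation $R_m(z_\star)=e^{\mi\theta}$ and prove $\{\abs{R_m}=1\}=\mi\mathbb{R}$ by combining $A$-stability, the maximum-modulus principle on the closed left half-plane (compactified at $\infty$, where $\abs{R_m(\infty)}=1$), and the symmetry $R_m(z)R_m(-z)=1$. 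Both arguments ultimately rest on the same dichotomy ($\abs{R_m}<1$ in the open left half-plane, $>1$ in the open right half-plane), but they establish it differently: the paper's distance-product computation is shorter and fully explicit once the Saff--Varga zero location is invoked, whereas your soft argument replaces that citation by $A$-stability (note that the paper's definition of $A$-stability, via nonsingularity of $I-zA$ for $\Re z\le 0$, indeed legitimizes your placement of the poles of $R_m$ in $\Re z>0$) and isolates the structural ingredients, so it applies verbatim to any $A$-stable method whose stability function satisfies $R(-z)=1/R(z)$, i.e.\ any diagonal Pad\'e/symmetric scheme. Your extra care about the case $P_m(-z_\star)=0$ and about the degenerate angles $\theta=0$ ($m$ even) and $\theta=\pm\pi$ ($m$ odd) is sound and consistent with the degree-drop discussion the paper defers to Proposition \ref{prop3}.
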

\begin{proof}
We note that all the zeros of $P_m(z)$ are in the open left half-plane (see e.g. \cite{SaffVarga1975}) and thus the polynomial $P_m(-z)$ has all its zeros in the open right-half complex plane. Let us suppose by contradiction that $M_m(\theta,z)$ has a zero $\hat z$ with $\Re \hat z > 0$. If we let $\{\hat s_j \}_{j=1}^m$ and $\{-\hat s_j \}_{j=1}^m$ be the zeros of $P_m(z)$ and $P_m(-z)$, respectively, from \eqref{eq27} we deduce that
$\abs{P_m(\hat z)} = \abs{P_m(-\hat z)}$, but this is impossibile since
\begin{equation*}
	\abs{P_m(\hat z)} = \prod_{j=1}^m \vert \hat z - \hat s_j \vert < \prod_{j=1}^m \abs{\hat z + \hat s_j} = \abs{P_m(-\hat z)}.
\end{equation*}
A similar contradiction is obtained when assuming $\Re \hat z  < 0$.
\end{proof}

Before characterizing the solutions of \eqref{eq25} we need the two following technical lemmas.

\begin{lemma} \label{lemma7}
Let $F(z)$ be analytic in a vicinity of the left half-plane and let
\begin{equation*}
\vert F(z) \vert \leq 1, \qquad \text{for } \Re z \leq 0.
\end{equation*}
Then, $F(\mi y) = e^{\mi \theta}$ for some $\theta, y \in \mathbb{R}$ implies $F'(\mi y) \neq 0$.
\end{lemma}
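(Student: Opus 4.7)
The plan is to argue by contradiction via a local Taylor expansion of $F$ at the boundary point $\mi y$, combined with a pigeonhole observation about the angular aperture of the left half-plane at that point.

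Suppose, for contradiction, that $F'(\mi y) = 0$. The constant case $F \equiv e^{\mi\theta}$ would directly contradict the conclusion, but it is excluded by context, since the lemma is intended for non-constant stability functions; I therefore take $F$ non-constant. As $F$ is analytic in a full complex neighborhood of $\mi y$, there exists a smallest integer $k \geq 2$ with $F^{(k)}(\mi y) \neq 0$, and
\[
F(z) \;=\; e^{\mi\theta} + c\,(z - \mi y)^k + O\bigl((z-\mi y)^{k+1}\bigr), \qquad c := \frac{F^{(k)}(\mi y)}{k!} \neq 0.
\]

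Next I parametrize points near $\mi y$ as $z = \mi y + r e^{\mi\alpha}$ with $r > 0$ small. Since $\Re(\mi y) = 0$, the constraint $\Re z \leq 0$ simplifies to $\cos\alpha \leq 0$, i.e.\ $\alpha \in [\pi/2, 3\pi/2]$. A direct expansion yields
\[
|F(z)|^2 \;=\; 1 + 2|c|\,\cos(k\alpha + \beta)\, r^k + O(r^{k+1}), \qquad \beta := \arg c - \theta,
\]
so the hypothesis $|F| \leq 1$ on $\Re z \leq 0$ forces $\cos(k\alpha + \beta) \leq 0$ for every $\alpha \in [\pi/2, 3\pi/2]$.

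Finally, as $\alpha$ sweeps the closed interval $[\pi/2, 3\pi/2]$ of length $\pi$, the phase $k\alpha + \beta$ sweeps an interval of length $k\pi$; for $k \geq 2$ this is at least $2\pi$, so the phase covers all residues modulo $2\pi$ and $\cos(k\alpha + \beta)$ necessarily takes strictly positive values. This contradicts the sign condition just derived and proves $F'(\mi y) \neq 0$. The only delicate ingredient is this last pigeonhole step: it relies on the geometric fact that the angular aperture of the left half-plane at a boundary point is exactly $\pi$ — just enough so that the map $z \mapsto z^k$ wraps it around the full circle as soon as $k \geq 2$, which is precisely why the obstruction disappears at $k=1$.
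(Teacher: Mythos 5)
Your proof is correct and takes essentially the same route as the paper's: contradiction via the order-$k$ Taylor expansion at $\mi y$ with $k \ge 2$, followed by the observation that the aperture $\pi$ of the left half-plane, multiplied by $k \ge 2$, wraps the phase of the leading term around the full circle, yielding an admissible direction with $\vert F \vert > 1$. Your interval-sweep formulation of that last step is a cleaner packaging of the paper's explicit construction of the root index $j$ (which requires a case analysis on the parity of $k$ and the sign of $\varphi$), and your explicit exclusion of the constant case $F \equiv e^{\mi\theta}$ addresses an edge case the paper's proof leaves implicit, since for constant $F$ the statement literally fails.
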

\begin{proof}
We will proceed by contradiction. Suppose $F'(\mi y) = 0$. As $F$ is analytic in the vicinity of $z = \mi y$ we have that
\begin{equation*}
F(z) = e^{\mi \theta}+c (z-\mi y)^k + \mathcal{O}\bigl((z-\mi y)^{k+1}\bigr),
\end{equation*}
for some integer $k \geq 2$ and some $c \in \mathbb{C}\setminus \{0\}$. Thus
\begin{equation*}
\vert F(z) \vert = \vert 1+re^{\mi \varphi} (z-\mi y)^k \vert+ \mathcal{O}((z-\mi y)^{k+1}),
\end{equation*}
for some $r > 0$, $\varphi \in (-\pi,\pi]$. Let us now choose $z$ so that for some $\varepsilon > 0$
\begin{equation*}
(z-\mi y)^k = \varepsilon^k e^{-\mi \varphi},
\end{equation*}
i.e., for any $j \in \mathbb{Z}$ we can set 
\begin{equation*}
z = \mi y+\varepsilon e^{-\mi \frac{(\varphi+2\pi j)}{k}}.
\end{equation*}
We want to choose $j$ so that 
\begin{equation*}
\Re z = \varepsilon \cos\left(\frac{\varphi+2\pi j}{k}\right) \leq 0.
\end{equation*}
Since $k \geq 2$, note that we necessarily have that $\nicefrac{\vert \varphi \vert}{k} \leq \nicefrac{\pi}{2}$. Let us first assume that $\varphi \geq 0$ and set $j = \lfloor \nicefrac{k}{2} \rfloor$. If $k$ even, then $j = \nicefrac{k}{2}$ and we have that
\begin{equation*}
\pi \leq \frac{\varphi+2\pi j}{k} = \frac{\varphi}{k}+\pi \leq \frac{3}{2}\pi.
\end{equation*}
If $k$ odd, then $j = \nicefrac{(k-1)}{2}$ and $k \geq 3$ and thus
\begin{equation*}
\frac{2}{3} \pi \leq \frac{\left(\varphi+2\pi j\right)}{k} = \frac{\varphi}{k}+\pi\left(1-\frac{1}{k}\right) < \frac{3}{2}\pi.
\end{equation*}
Therefore in both cases $\Re z \leq 0$ and
\begin{equation*}
\vert F(z) \vert = 1+r \varepsilon^k + \mathcal{O}\bigl(\varepsilon^{k+1}\bigr),
\end{equation*}
implying that for small enough $\varepsilon > 0$, $\vert F(z) \vert > 1$, which gives us the contradiction we were looking for. For $\varphi \leq 0$, the same contradiction is obtained with the choice $j = -\lfloor \nicefrac{k}{2} \rfloor$.
\end{proof}

\begin{lemma} \label{lemma8}
Let $R_m(z) = \nicefrac{P_m(z)}{P_m(-z)}$ be the $(m,m)$-Pad\'e approximant to $e^z$ and let $R_m(\mi y) = e^{\mi \theta}$ for some $\theta, y \in \mathbb{R}$. Then for $\abs{t} < r$ with small enough $r > 0$, there exists a unique  $z(t)$   in the vicinity of $\mi y$ such that $R_m(z(t)) = e^{t+\mi \theta}$. Furthermore
\begin{equation*}
z(t) = \mi y+ \beta t +\mathcal{O}(t^2),
\end{equation*}
when $\abs{t} \to 0$, where 
\begin{equation*}
\beta = \frac{1}{1-\frac{y^{2m}}{\vert P_m(\mi y) \vert^2}} > 1.
\end{equation*}
\end{lemma}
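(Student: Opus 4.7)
The plan is to construct $z(t)$ as an analytic local inverse of $R_m$ and then extract $z'(0)$ from a logarithmic-derivative identity driven by Padé optimality. Because the Gauss method is $A$-stable, $|R_m(z)| \le 1$ on $\Re z \le 0$, and because $P_m$ has real coefficients one has $P_m(-\mi y) = \overline{P_m(\mi y)}$, hence $|R_m(\mi y)| = 1$; the hypothesis $R_m(\mi y) = e^{\mi \theta}$ is therefore really a statement about the argument. Lemma~\ref{lemma7} applied to $F = R_m$ gives $R_m'(\mi y) \ne 0$, and the analytic inverse function theorem then provides a unique analytic branch of $R_m^{-1}$ near $e^{\mi \theta}$ sending $e^{\mi \theta}$ to $\mi y$; composing with $t \mapsto e^{t+\mi \theta}$ yields the required unique analytic $z(t)$ on some disk $|t| < r$. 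Differentiating $R_m(z(t)) = e^{t+\mi \theta}$ at $t = 0$ gives
\[
  z'(0) = \frac{R_m(\mi y)}{R_m'(\mi y)} = \left(\frac{R_m'}{R_m}(\mi y)\right)^{-1},
\]
so identifying $\beta$ reduces to evaluating the logarithmic derivative $R_m'/R_m$ at $\mi y$.

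My plan for this is to exploit the Padé property globally via a polynomial identity. From $R_m = P_m(z)/P_m(-z)$,
\[
  \frac{R_m'(z)}{R_m(z)} = \frac{P_m'(z)P_m(-z) + P_m'(-z)P_m(z)}{P_m(z)P_m(-z)},
\]
and the Padé relation $R_m(z) = e^z + \mathcal{O}(z^{2m+1})$ near $z = 0$ forces $\log R_m(z) = z + \mathcal{O}(z^{2m+1})$, so $(R_m'/R_m)(z) = 1 + \mathcal{O}(z^{2m})$. Consequently the polynomial
\[
  N(z) := P_m'(z)P_m(-z) + P_m'(-z)P_m(z) - P_m(z)P_m(-z)
\]
vanishes to order at least $2m$ at the origin, has degree exactly $2m$ (the two summands containing $P_m'$ have degree $2m-1$, while $P_m(z)P_m(-z)$ contributes a leading term $(-1)^m z^{2m}$), and therefore must equal $(-1)^{m+1}z^{2m}$ identically. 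Rearranging yields the closed-form identity
\[
  \frac{R_m'(z)}{R_m(z)} = 1 - \frac{(-1)^m z^{2m}}{P_m(z)P_m(-z)}.
\]

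Substituting $z = \mi y$ and using $(-1)^m (\mi y)^{2m} = y^{2m}$ together with $P_m(\mi y)P_m(-\mi y) = |P_m(\mi y)|^2$ then gives the stated formula for $\beta$. For the strict inequality, I would observe that the explicit identity makes $(R_m'/R_m)(\mi y)$ real-valued and bounded above by $1$; Lemma~\ref{lemma7} forbids it from vanishing on $\mi \mathbb{R}$, and its value at $y = 0$ equals $1$, so by continuity in $y$ it lies throughout in $(0,1]$ with equality only at $y = 0$. Hence $\beta \ge 1$ with strict inequality whenever $y \ne 0$, which is the case implicitly in force in the lemma. The main obstacle is the polynomial identity of the middle step: once one recognises that the $z^{2m+1}$ Padé error in $R_m$ descends to an $\mathcal{O}(z^{2m})$ deviation of $R_m'/R_m$ from $1$, the identity drops out of a single leading-coefficient comparison, and the rest is substitution.
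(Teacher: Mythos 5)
Your proposal is correct, and its skeleton matches the paper's: Lemma \ref{lemma7} gives $R_m'(\mi y)\neq 0$, the implicit/inverse function theorem yields the unique local branch, and $\beta = e^{\mi\theta}/R_m'(\mi y)$. Where you genuinely diverge is in evaluating $\beta$. The paper computes $e^{-\mi\theta}R_m'(\mi y)=2\Re\bigl(P_m'(\mi y)/P_m(\mi y)\bigr)$ and then invokes two specific identities of Ehle, namely $2P_m'(z)=P_m(z)-zP_{m-1}(z)$ and the fact that the only odd power in $P_m(-z)P_{m-1}(z)$ is $(-1)^m z^{2m-1}$, to arrive at $1-y^{2m}/\abs{P_m(\mi y)}^2$. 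You instead derive the global identity
\begin{equation*}
\frac{R_m'(z)}{R_m(z)} = 1-\frac{(-1)^m z^{2m}}{P_m(z)P_m(-z)}
\end{equation*}
directly from the Pad\'e property: $\log R_m(z)=z+\mathcal{O}(z^{2m+1})$ forces your numerator polynomial $N(z)$ to vanish to order $2m$ at the origin (legitimate, since $P_m(0)\neq 0$), while the two cross terms involving $P_m'$ only reach degree $2m-1$, so the $z^{2m}$ coefficient of $N$ comes solely from $-P_m(z)P_m(-z)$ and equals $(-1)^{m+1}$; hence $N(z)=(-1)^{m+1}z^{2m}$ identically. This is self-contained (no external citations on the structure of $P_m$) and yields the identity for all $z$, not just on the imaginary axis, which is a nice byproduct. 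The strictness argument also differs: the paper takes $\Re t>0$, uses $A$-stability to force $\Re z(t)>0$ and hence $\beta>0$, which combined with the formula gives $\beta\ge 1$; you observe that $(R_m'/R_m)(\mi y)$ is real, at most $1$, nonvanishing on the whole imaginary axis by Lemma \ref{lemma7} (applicable at every $\mi y$ since $\abs{R_m(\mi y)}\equiv 1$ by realness of the coefficients, and $R_m$ is analytic near $\Re z\le 0$ because the poles lie in the open right half-plane), and equal to $1$ at $y=0$, so by continuity it stays in $(0,1]$. Both arguments carry the same small slack as the lemma statement itself: at $y=0$ one gets exactly $\beta=1$, so $\beta>1$ holds only for $y\neq 0$; you flag this explicitly, and it is indeed the case in force in all applications, since Propositions \ref{prop3}--\ref{prop5} treat the root at the origin separately.
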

\begin{proof}
By virtue of Lemma \ref{lemma7} it holds $R'(\mi y) \neq 0$, therefore, the implicit function theorem implies that
\begin{equation*}
z(t) = \mi y+ \beta t +\mathcal{O}(t^2),
\end{equation*}
when $\abs{t} \to 0$, with
\begin{equation*}
\beta = \frac{e^{\mi \theta}}{R'(\mi y)}.
\end{equation*}
To prove the simplified expression for $\beta$ we will use the following properties of the polynomials $P_m$:
\begin{equation*}
2P_m'(z) = P_m(z)-zP_{m-1}(z)
\end{equation*}
(see \cite[Lemma 7]{Ehle1973}), and that the only term with an odd power of $z$ in the product $P_m(-z)P_{m-1}(z)$ is the term of highest power, i.e., $(-1)^m z^{2m-1}$ (see \cite[Lemma 1]{Ehle1973}). 

Using $R(\mi y) = \nicefrac{P_m(\mi y)}{P_m(-\mi y)} = e^{\mi \theta}$ we have that
\begin{equation*}
  \begin{split}    
e^{-\mi \theta}R'(\mi y) &= e^{-\mi \theta}\left(\frac{P_m'(\mi y)}{P_m(-\mi y)}+ \frac{P_m'(-\mi y)P_m(\mi y)}{P_m(-\mi y)^2}\right)= \frac{P_m'(\mi y)}{P_m(\mi y)}+\frac{P_m'(-\mi y)}{P_m(-\mi y)}
	\\ & = 2\Re \left(\frac{P_m'(\mi y)}{P_m(\mi y)}\right) = 2\Re \left(\frac{P_m'(\mi y)P_m(-\mi y)}{\vert P_m(\mi y) \vert^2}\right)
	\\ &= 1-\Re\left(\frac{\mi y P_m(-\mi y)P_{m-1}(\mi y)}{\vert P_m(\mi y)\vert^2}\right)= 1-\frac{y^{2m}}{\vert P_m(\mi y)\vert^2}.
  \end{split}
\]
Thus we have obtained the required expression for $\beta$, and proved that $\beta$ is a real  number. It remains to show $\beta > 1$. 

Let $\Re t > 0$, implying $\vert R_m(z(t)) \vert > 1$ and thus by A-stability $\Re z(t) > 0$. Choosing $\vert t\vert $ small enough implies that $\beta > 0$ and, due to the above expression for $\beta$, that in fact $\beta > 1$.
\end{proof}
We are now able to characterize $\{ \hat z_j(t,\theta) \}_{j=1}^m$ with the following proposition, that precludes two extremal cases.
\begin{proposition} \label{prop3}
The $m$ solutions of the equation
\begin{equation*}
	R_m(z) = e^{t + \mi \theta}
\end{equation*}
where $R_m(z)$ is stability function of the $m$-stage Runge-Kutta Gauss method, $t \in \mathbb{C}$ and $\theta \in [-\pi,\pi]$ with the exception $\theta=0$ if $m$ is even, and $\theta=\pm \pi$ if $m$ is odd, satisfy
\begin{equation*}
	\hat z_j(t,\theta) = \mi y_j(\theta) + \beta_j(\theta) t + \mathcal{O}(t^2)
\end{equation*}
when $\abs{t} \to 0$.
All the $\beta_j(\theta)$ and $y_j(\theta)$ are real numbers uniformly bounded in terms of $\theta$, when $\theta$ is distant from $\{-\pi, 0, \pi\}$. Moreover, it holds that $\beta_j(\theta) > 1$. 
\end{proposition}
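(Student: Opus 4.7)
The strategy is to analyse the unperturbed equation $R_m(z) = e^{\mi \theta}$ using Proposition \ref{prop2}, verify that its $m$ roots are simple, and then lift each of them to the perturbed equation via Lemma \ref{lemma8}. At $t = 0$ the equation reduces to $R_m(z) = e^{\mi \theta}$, which is equivalent to $M_m(\theta, z) = 0$ since the zeros of $P_m(-z)$ lie in the open right half-plane and hence $P_m(-\mi y) \ne 0$ for every real $y$. By Proposition \ref{prop2}, every root of $M_m(\theta,\cdot)$ then has the form $\mi y_j(\theta)$ with $y_j(\theta) \in \mathbb{R}$, which pins down the constant terms of the desired expansion.

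Next, I would count these roots and show they are simple. From \eqref{eq27}, the coefficient of $z^m$ in $M_m(\theta, z)$ is $1 - (-1)^m e^{\mi \theta}$, which vanishes exactly at the excluded values ($\theta = 0$ for $m$ even, $\theta = \pm \pi$ for $m$ odd). Off those values, $M_m(\theta, \cdot)$ has exact degree $m$ and hence $m$ roots counted with multiplicity. To show each root is simple, I would argue by contradiction: a double root $\mi y$ would satisfy both $P_m(\mi y) = e^{\mi \theta} P_m(-\mi y)$ and $P_m'(\mi y) = -e^{\mi \theta} P_m'(-\mi y)$; substituting these into the derivative
\[
R_m'(z) = \frac{P_m'(z)\, P_m(-z) + P_m(z)\, P_m'(-z)}{P_m(-z)^2}
\]
at $z = \mi y$ would give $R_m'(\mi y) = 0$, contradicting Lemma \ref{lemma7} applied to the A-stable function $R_m$.

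With $m$ simple roots in hand, Lemma \ref{lemma8} applies at each $\mi y_j(\theta)$ and delivers unique analytic branches $\hat z_j(t,\theta) = \mi y_j(\theta) + \beta_j(\theta) t + \mathcal{O}(t^2)$ with each $\beta_j(\theta)$ real and strictly greater than $1$. Since $M_m(\theta,\cdot)$ has exact degree $m$, these $m$ branches exhaust the roots of the perturbed equation for $|t|$ small enough. For the uniform bounds, as $\theta$ ranges over a compact subset of $[-\pi, \pi]$ disjoint from $\{-\pi, 0, \pi\}$, the coefficients of $M_m(\theta,z)$ depend continuously on $\theta$ and the leading coefficient is bounded away from zero, so the roots $\mi y_j(\theta)$ depend continuously on $\theta$ and stay in a bounded set. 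The closed-form expression for $\beta_j(\theta)$ from Lemma \ref{lemma8} is then continuous in $\theta$, and its denominator is bounded away from zero on this compact range (since $\beta_j > 1$ there), giving the required uniform bound on $\beta_j$. The main obstacle I expect is establishing simplicity of the roots of $M_m(\theta,\cdot)$, since that is precisely what allows Lemma \ref{lemma8} to be applied branch-by-branch to produce the full family of $m$ expansions.
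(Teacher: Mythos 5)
Your proposal is correct and follows essentially the same route as the paper: reduce the case $t=0$ to $M_m(\theta,z)=0$, invoke Proposition \ref{prop2} to place the roots on the imaginary axis, observe that the leading coefficient $1-(-1)^m e^{\mi\theta}$ vanishes only at the excluded angles, and lift each root $\mi y_j(\theta)$ to a branch $\hat z_j(t,\theta)$ via Lemma \ref{lemma8}. The only deviations are cosmetic: the paper derives uniform boundedness of the $y_j(\theta)$ from an explicit root-location estimate (Marden) rather than your continuity--compactness argument, and your explicit simplicity check for the roots of $M_m(\theta,\cdot)$, while valid, is already guaranteed by Lemma \ref{lemma7}, whose nonvanishing-derivative conclusion is precisely what makes Lemma \ref{lemma8} applicable at every root.
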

\begin{proof}
We recall that
\begin{equation*}
	M_m(\theta,z) = P_m(z) - e^{\mi \theta} P_m(-z) = \sum_{j=0}^m p_j(1+(-1)^{j+1} e^{\mi\theta}) z^j.
\end{equation*}
The degree of $M_m(\theta,z)$ is exactly $m$ if and only if $\theta \ne 0$ for even $m$ and $\theta \ne \pm \pi$ for odd $m$. Since by hypothesis we are in that case, let denote by $\{\mi y_j(\theta)\}_{j=1}^m$, the $m$ zeros (in $z$) of $M_m(\theta,z)$. Note that $y_j(\theta) \in \mathbb{R}$ thanks to Proposition \ref{prop2}. Recalling standard estimates on the location of zeros of polynomials (see e.g. \cite[Theorem 27.2]{Marden1966}), we can bound
\begin{equation*}
	\max_{j=1,\ldots,m} \abs*{y_j(\theta)} \le 1+ \max_{j=1,\ldots,m} p_j \left\vert\frac{1+(-1)^{j+1} e^{\mi \theta}}{1+(-1)^{m+1}e^{\mi \theta}}\right\vert.
\end{equation*}
It is easy to see that, for each $0<d<\pi$, for $\theta \in [-\pi+d,-d] \cup [d,\pi-d]$, we obtain $\max_{j=1,\ldots,m} \abs{y_j(\theta)} \le M(m,d)$, where $M(m,d)>0$ does not depend on $\theta$. However, when $\theta$ is near $0$ if $m$ is even, or $\theta$ is near $\pm \pi$ if $m$ is odd, we can only bound, respectively
\begin{equation*}
	\max_{j=1,\ldots,m} \left\vert y_j(\theta) \right\vert = \mathcal{O}\left( \frac{1}{\theta} \right) \qquad \text{and} \qquad \max_{j=1,\ldots,m} \left\vert y_j(\theta) \right\vert = \mathcal{O}\left(\frac{1}{\pm \pi-\theta}\right).
\end{equation*}
Finally, thanks to Lemma \ref{lemma8}, the $m$ solutions of $R_m(z) = e^{t +i \theta}$ can be written as
\begin{equation*}
	\hat z_j(t,\theta) = \mi y_j(\theta) + \beta_j(\theta) t + \mathcal{O}(t^2), \quad  j = 1,\ldots,m
\end{equation*}
where $\beta_j(\theta)>1$ are the coefficients
\begin{equation*}
\beta_j(\theta) = \frac{1}{1-\frac{y_j(\theta)^{2m}}{\vert P_m(\mi y_j(\theta)) \vert^2}} .
\end{equation*}
\end{proof}
Now we consider the other two cases.
\begin{proposition} \label{prop4}
The solutions $\{\hat z_j(t,0) \}_{j=1}^m$ of $R_m(z) = e^{t}$, for $t \in \mathbb{C}$ when $\abs{t} \to 0$, can be characterized as
\begin{equation} \label{eq38}
	\hat z_1(t,0) = t + \mathcal{O}\bigl(t^{2m+1}\bigr), \quad \quad 
	\begin{cases}
		\hat z_{2\ell}(t,0) = \mi r_\ell + \delta_{\ell} t + \mathcal{O}\bigl(t^2\bigr),
		\\ \hat z_{2\ell+1}(t,0) = -\mi r_\ell + \delta_{\ell} t +  \mathcal{O}\bigl(t^2\bigr),
	\end{cases}
\end{equation}
for $\ell = 1, \ldots, \left\lceil \nicefrac{m}{2} \right\rceil-1$, where $r_\ell \in \mathbb{R}$, $\delta_\ell >1$. If $m$ is even, \eqref{eq38} gives the expression of $m-1$ solutions, whereas the last one satisfies
\begin{equation*}
	\hat z_{m}(t,0) = \frac{D_m}{t} + \mathcal{O}(1)
\end{equation*}
with $D_m>0$, when $\abs{t} \to 0$.
\end{proposition}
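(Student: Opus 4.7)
The $m$ solutions of $R_m(z) = e^t$ are precisely the zeros (in $z$) of the polynomial
$P_m(z) - e^t P_m(-z) = \sum_{j=0}^m p_j(1 - (-1)^j e^t) z^j$. At $t = 0$ this reduces to $M_m(0,z) = 2\sum_{j \text{ odd}} p_j z^j$, which is odd in $z$, has $z = 0$ as a simple zero and, by Proposition \ref{prop2} together with the reality of its coefficients, has the remaining zeros arranged as conjugate pairs $\pm \mi r_\ell$ on the imaginary axis. Decisively, the degree of $M_m(0,\cdot)$ is $m$ when $m$ is odd but only $m-1$ when $m$ is even, because the leading coefficient $p_m\bigl(1-(-1)^m\bigr)$ vanishes in that case. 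Thus in the even case exactly one of the $m$ roots of $P_m(z)-e^tP_m(-z)$ must escape to infinity as $t \to 0$; the remaining $m-1$ roots in all cases lie in a neighbourhood of the finite zeros of $M_m(0,\cdot)$.

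For the root near $z = 0$, the Padé property $R_m(z) = e^z + \mathcal{O}(z^{2m+1})$ rewrites $R_m(z) = e^t$ as $e^z - e^t = \mathcal{O}(z^{2m+1})$. The implicit function theorem applied at $(z,t)=(0,0)$ (using $\partial_z(e^z-e^t)|_{(0,0)}=1\ne 0$) then gives the unique analytic branch $\hat z_1(t,0) = t + \mathcal{O}(t^{2m+1})$. For each nonzero imaginary pair $\pm\mi r_\ell$, Lemma \ref{lemma8} applies directly with $\theta = 0$, producing $\hat z(t,0) = \pm \mi r_\ell + \beta t + \mathcal{O}(t^2)$ with $\beta > 1$ real. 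Since $P_m$ has real coefficients, $\vert P_m(-\mi r_\ell)\vert = \vert P_m(\mi r_\ell)\vert$ and $(-r_\ell)^{2m}=r_\ell^{2m}$, so the coefficient furnished by Lemma \ref{lemma8} is identical at $\mi r_\ell$ and $-\mi r_\ell$, yielding a common $\delta_\ell > 1$ as asserted.

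For even $m$ I handle the escaping root by the substitution $w = 1/z$. Dividing $P_m(z) - e^t P_m(-z)$ by $z^m$ gives
\begin{equation*}
F(w,t) := p_m(1-e^t) + p_{m-1}(1+e^t)w + \mathcal{O}(w^2) = 0,
\end{equation*}
where $\partial_w F(0,0) = 2p_{m-1}\ne 0$. The implicit function theorem produces a unique analytic branch $w(t) = \frac{p_m}{2p_{m-1}} t + \mathcal{O}(t^2)$; inverting and using $p_m = 1$, $p_{m-1} = m(m+1)$ from \eqref{eq23} gives $\hat z_m(t,0) = \frac{2m(m+1)}{t} + \mathcal{O}(1)$, so $D_m = 2m(m+1) > 0$.

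The main obstacle is bookkeeping: one must confirm that the $m-1$ local branches from the finite zeros of $M_m(0,\cdot)$ plus, in the even case, the single escaping branch from the $w$-analysis exhaust all $m$ roots of $P_m(z) - e^t P_m(-z)$. This follows from a Rouché argument — for small $\vert t\vert$ the polynomial has precisely one zero in a disc $\vert z\vert \ge M$ (coming from the cancellation of the leading coefficient in the even case, and none in the odd case), and exactly one zero in each small disc around the finite zeros of $M_m(0,\cdot)$, all of which are simple — matching the total count $m$ and precluding any additional branches.
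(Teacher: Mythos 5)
Your proof is correct, and for most of its length it runs along the same lines as the paper's: the roots of $P_m(z)-e^tP_m(-z)$ are tracked from the zeros of $M_m(0,z)$, the Pad\'e property handles the branch at $z=0$, and Lemma \ref{lemma8} handles the conjugate pairs (your symmetry argument, $\vert P_m(-\mi r_\ell)\vert=\vert P_m(\mi r_\ell)\vert$ and $(-r_\ell)^{2m}=r_\ell^{2m}$ applied to the explicit formula for $\beta$, is equivalent to the paper's observation that only even powers of $\mi r_\ell$ appear in its expression for $\delta_\ell$). The genuine divergence is the escaping root for even $m$: the paper uses Vieta's relation --- the product of the roots of $P_m(z)-e^tP_m(-z)$ is independent of $t$ and equals $p_0$, because the constant and leading coefficients share the factor $(1-e^t)$ --- to infer $\hat z_m(t,0)=p_0/\bigl(t\prod_\ell r_\ell^2\bigr)+\mathcal{O}(1)$, whereas you invert with $w=\nicefrac{1}{z}$ and apply the implicit function theorem at $w=0$. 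Your route buys an explicit closed form $D_m = \nicefrac{2p_{m-1}}{p_m} = 2m(m+1)$ that does not require knowledge of the $r_\ell$, and it is consistent with the paper's expression (for $m=2$ both give $12$; for $m=4$, $\nicefrac{1680}{42}=40=2\cdot 4\cdot 5$). Your closing Rouch\'e argument also makes explicit the root bookkeeping that the paper leaves implicit ("if $m$ is even we are missing exactly one"). The only loose thread is your unproved assertion that the nonzero zeros of $M_m(0,\cdot)$ are simple; this follows in one line from Lemma \ref{lemma7}: since $R_m(z)-1 = \nicefrac{M_m(0,z)}{P_m(-z)}$ and $P_m(-z)$ has no imaginary zeros, a multiple zero at $\mi r_\ell$ would force $R_m(\mi r_\ell)=1$ together with $R_m'(\mi r_\ell)=0$, contradicting that lemma applied to $F=R_m$ (admissible by $A$-stability). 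With that line added, your argument is complete.
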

\begin{proof}
Recall that the solutions are the zeros of the polynomial
\begin{equation} \label{eq39}
	M_m(0,z) - tP_m(-z) + \mathcal{O}(t^2) P_m(z),
\end{equation}
where
\begin{equation*}
	M_m(0,z) = 2\sum_{\ell=1}^{\lceil\nicefrac{m}{2}\rceil} p_{2\ell-1} z^{2\ell-1}.
\end{equation*}
Again we look for $\{ \hat z_j(t,0) \}_{j=1}^m$ in the vicinity of the roots of the polynomial $M_m(0,z)$ of degree $2\lceil \nicefrac{m}{2} \rceil-1$. If $m$ is odd, then we find in this way all the solutions as in the previous case, if $m$ is even we are missing exactly one of them. One solution is always of the form $\hat{z}_1(t,0) = t + \mathcal{O}(t^{2m+1})$ since $R_m(t)$ is the $(m,m)$-Pad{\' e} approximant of $e^{t}$.
We write, for $\ell=1,\ldots, \lceil \nicefrac{m}{2} \rceil-1$, the solutions in pairs of the form, since in $M_m(0,z)$ appear only odd powers of $z$,
\begin{equation*} 
	\begin{cases}
		\hat z_{2\ell}(t,0) & = \mi r_\ell + \mathcal{O}(t),
		\\ \hat z_{2\ell+1}(t,0) & = -\mi r_\ell + \mathcal{O}(t),
	\end{cases}
\end{equation*}
where $\{\pm ir_\ell\}$ are the $2 \lceil \nicefrac{m}{2} \rceil-2$ zeros of $M_m(0,z)$ different from $0$. Moreover, we can characterize these $\lceil \nicefrac{m}{2} \rceil-1$ pairs as
\begin{equation*}
	\begin{cases}
		\hat z_{2\ell}(t,0) & = \mi r_\ell + \delta_\ell t + \mathcal{O}(t^2),
		\\ \hat z_{2\ell+1}(t,0) & = -\mi r_\ell + \delta_\ell t + \mathcal{O}(t^2),
	\end{cases}
\end{equation*}
for the same positive constants $\delta_\ell$. Indeed, thanks to Lemma \ref{lemma8}, it holds that
\begin{equation*}
	\delta_\ell = \frac{P_m(-ir_\ell)}{2 \sum_{j=1}^{\lceil \nicefrac{m}{2} \rceil} (2j-1) p_{2j-1} (\mi r_\ell)^{2j-2}} > 1.
\end{equation*}
Since $M_m(0,ir_\ell) = M_m(0,-ir_\ell)=0$, in $P_m(-ir_\ell)$ only even powers of $-ir_\ell$ appears, and we conclude that $\delta_\ell$ is the same shared constant both for $\hat z_{2\ell}(t,0)$ and $\hat z_{2\ell+1}(t,0)$.
We introduce a last type of solution that only appears for even $m$. We notice that the product of the zeros of the polynomial \eqref{eq39} is fixed, and this is equal to $p_0$, defined in \eqref{eq23}. From this, we deduce that
\begin{equation*}
	\hat z_m(t,0) = \frac{p_0}{t \prod_{\ell=1}^{\nicefrac{m}{2} }r_\ell^2} + \mathcal{O}(1) = \frac{D_m}{t} + \mathcal{O}(1)
\end{equation*}
with $D_m > 0$.
\end{proof}
A similar result holds for $\theta = \pm \pi$, the proof of which we omit for simplicity.
\begin{proposition} \label{prop5}
The solutions $\{\hat z_j(t,\pi) \}_{j=1}^m$ of $R_m(z) = -e^{t}$, for $t \in \mathbb{C}$ when $\abs{t} \to 0$ can be characterized as
\begin{equation} \label{eq47}
	\begin{cases}
		\hat z_{2\ell}(t,\pi) & = i \rho_\ell + \gamma_{\ell} t + \mathcal{O}(t^2),
		\\ \hat z_{2\ell+1}(t,\pi) & = -i \rho_\ell + \gamma_{\ell} t +  \mathcal{O}(t^2),
	\end{cases} \quad \ell = 0, \ldots, \left\lfloor \frac{m}{2} \right\rfloor,
\end{equation}
where $r_\ell \in \mathbb{R}$, $\gamma_\ell >1$. If $m$ is odd, \eqref{eq47} gives only the expression of $m-1$ solutions, and the last one satisfies
\begin{equation} \label{last_one}
	\hat z_{m}(t,\pi) = \frac{E_m}{t} + \mathcal{O}(1),
\end{equation}
with $E_m>0$, when $\abs{t} \to 0$.
\end{proposition}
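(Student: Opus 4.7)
My plan is to mirror the proof of Proposition \ref{prop4}, replacing $M_m(0,z)$ by
\[
M_m(\pi,z) = P_m(z) + P_m(-z) = 2\sum_{\ell=0}^{\lfloor m/2\rfloor} p_{2\ell}\, z^{2\ell},
\]
which contains only even powers of $z$ and whose zeros, by Proposition \ref{prop2}, are purely imaginary. The starting point is the expansion
\[
P_m(z) - e^{t+\mi\pi} P_m(-z) = M_m(\pi,z) + t P_m(-z) + \mathcal{O}(t^2)\, P_m(z),
\]
and I would look for roots in the vicinity of those of $M_m(\pi,z)$. Because $M_m(\pi,z)$ is even in $z$, its purely imaginary zeros group into pairs $\pm\mi\rho_\ell$ with $\rho_\ell\in\mathbb{R}$.

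For each such pair I invoke Lemma \ref{lemma8} with $y = \pm\rho_\ell$ and $\theta=\pi$, obtaining expansions of the form $\mi\rho_\ell+\gamma_\ell t+\mathcal{O}(t^2)$ and $-\mi\rho_\ell+\gamma_\ell t+\mathcal{O}(t^2)$ with
\[
\gamma_\ell = \frac{1}{1-\rho_\ell^{2m}/|P_m(\mi\rho_\ell)|^2} > 1.
\]
The two members of a pair share the same $\gamma_\ell$ because $P_m$ has real coefficients, so $|P_m(-\mi\rho_\ell)|=|P_m(\mi\rho_\ell)|$ and the expression is invariant under $\rho_\ell\mapsto-\rho_\ell$. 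When $m$ is even, $\deg M_m(\pi,z)=m$, so the resulting $m/2$ pairs exhaust all $m$ solutions and \eqref{eq47} follows.

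When $m$ is odd, $\deg M_m(\pi,z) = m-1$, so only $m-1$ solutions arise by perturbation of the pairs $\pm\mi\rho_\ell$, and the missing root escapes to infinity as $|t|\to 0$. I would identify it via a Vieta-type argument applied to the full polynomial $M_m(\pi,z)+tP_m(-z)+\mathcal{O}(t^2)$: the coefficient of $z^m$ is furnished solely by $tP_m(-z)$ and equals $t(-1)^m p_m = -t$ (using $p_m=1$ and $m$ odd), while the constant term is $2p_0+\mathcal{O}(t)$. Hence the product of all $m$ roots equals $(-1)^m(2p_0)/(-t)\cdot\bigl(1+\mathcal{O}(t)\bigr) = 2p_0/t + \mathcal{O}(1)$, and dividing by the product $\prod_\ell \rho_\ell^2 > 0$ of the known paired roots yields \eqref{last_one} with $E_m = 2p_0/\prod_\ell\rho_\ell^2 > 0$. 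The main obstacle I anticipate is tracking the several $(-1)^m$ signs consistently through the Vieta computation so as to certify the positivity of $E_m$; once this bookkeeping is settled, the remaining work is a direct adaptation of the proof of Proposition \ref{prop4}.
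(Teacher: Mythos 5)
Your proposal is correct and follows exactly the route the paper intends: the paper omits this proof as ``similar'' to that of Proposition \ref{prop4}, and you faithfully adapt it---the even polynomial $M_m(\pi,z)=P_m(z)+P_m(-z)$ with purely imaginary paired zeros $\pm\mi\rho_\ell$ (Proposition \ref{prop2}), Lemma \ref{lemma8} giving the shared coefficients $\gamma_\ell>1$, and the same Vieta-type product-of-roots argument used for $\hat z_m(t,0)$ in Proposition \ref{prop4}, here correctly yielding the escaping root for odd $m$ with $E_m = 2p_0/\prod_\ell\rho_\ell^2>0$. The sign bookkeeping you flag is handled correctly (leading coefficient $p_m(1-e^t)=-t+\mathcal{O}(t^2)$ for odd $m$, constant term $2p_0+\mathcal{O}(t)$), so no gap remains.
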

We conclude this section with two technical lemmas regarding the ``conjugate'' pairs of solutions in \eqref{eq38} and \eqref{eq47}, and solution \eqref{last_one}, respectively.
\begin{lemma} \label{lemma2}
For $m \ge 3$ and for $\ell=1,\ldots, \lceil \nicefrac{m}{2} \rceil -1$, it holds 
\begin{equation*}
	R'_m(\hat z_{2\ell}(t,0)) -  R'_m(\hat z_{2\ell+1}(t,0)) = \mathcal{O}(t) \quad \text{and} \quad R'_m(\hat z_{2\ell}(t,0)) = \mathcal{O}(1)
\end{equation*} 
when $\abs{t} \to 0$. The same result holds as well for $ \{\hat z_{2\ell}(t,\pi), \hat z_{2\ell+1}(t,\pi) \}$ for $m \ge 2$ and $\ell = 0, \ldots, \lfloor \nicefrac{m}{2} \rfloor$.
\end{lemma}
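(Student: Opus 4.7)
The plan is to Taylor-expand $R_m'$ around the $t=0$ limits $\pm\mi r_\ell$ of the conjugate pair given in Proposition \ref{prop4}. Since
\[
	\hat z_{2\ell}(t,0) = \mi r_\ell + \delta_\ell t + \mathcal{O}(t^2), \qquad \hat z_{2\ell+1}(t,0) = -\mi r_\ell + \delta_\ell t + \mathcal{O}(t^2),
\]
a first-order expansion of $R_m'$ around $\pm\mi r_\ell$ gives
\[
	R_m'(\hat z_{2\ell}(t,0)) - R_m'(\hat z_{2\ell+1}(t,0)) = R_m'(\mi r_\ell) - R_m'(-\mi r_\ell) + \mathcal{O}(t),
\]
so the first claim reduces to the equality $R_m'(\mi r_\ell) = R_m'(-\mi r_\ell)$, and the second to boundedness of $R_m'$ at the points $\pm\mi r_\ell$.

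The required equality is a direct algebraic consequence of the defining property of $r_\ell$. Starting from
\[
	R_m'(z) = \frac{P_m'(z)P_m(-z) + P_m(z)P_m'(-z)}{P_m(-z)^2},
\]
and using $M_m(0,\mi r_\ell) = 0$, i.e., $P_m(\mi r_\ell) = P_m(-\mi r_\ell)$, both $R_m'(\mi r_\ell)$ and $R_m'(-\mi r_\ell)$ simplify to the same value $\bigl(P_m'(\mi r_\ell) + P_m'(-\mi r_\ell)\bigr) / P_m(-\mi r_\ell)$. The denominator is nonzero because, as recalled in the proof of Proposition \ref{prop2}, all zeros of $P_m(-z)$ lie in the open right half-plane and hence off the imaginary axis; this in particular makes $R_m$ analytic in a neighborhood of $\pm\mi r_\ell$, so $R_m'(\pm\mi r_\ell)$ is finite and the $\mathcal{O}(1)$ bound on $R_m'(\hat z_{2\ell}(t,0))$ follows immediately.

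For the conjugate pairs associated to $\theta = \pm\pi$, the argument is structurally identical: the relation $M_m(\pi, \mi \rho_\ell) = 0$ now reads $P_m(\mi \rho_\ell) = -P_m(-\mi \rho_\ell)$, and the same substitution in the numerator of $R_m'$ produces a common value, now of the form $\bigl(P_m'(\mi \rho_\ell) - P_m'(-\mi \rho_\ell)\bigr) / P_m(-\mi \rho_\ell)$, at both $\pm\mi \rho_\ell$. Hence once again $R_m'(\mi \rho_\ell) = R_m'(-\mi \rho_\ell)$ and Taylor expansion concludes. The main obstacle---really only a bookkeeping matter---is tracking the cancellation in the numerator of $R_m'$ induced by the sign constraint $P_m(\mi r_\ell) = \pm P_m(-\mi r_\ell)$; once this is in place, the rest is a smoothness argument on $R_m$ near the finite points $\pm\mi r_\ell$ (resp.\ $\pm\mi \rho_\ell$), which is justified by the same location of the poles of $R_m$ already exploited in Proposition \ref{prop2}.
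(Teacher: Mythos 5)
Your proof is correct and follows essentially the same route as the paper's: both start from the quotient-rule formula $R_m'(z) = \bigl(P_m'(z)P_m(-z)+P_m'(-z)P_m(z)\bigr)/P_m(-z)^2$, use the defining relation $P_m(\mi r_\ell)=P_m(-\mi r_\ell)$ (resp.\ $P_m(\mi\rho_\ell)=-P_m(-\mi\rho_\ell)$ for $\theta=\pi$) to conclude $R_m'(\mi r_\ell)=R_m'(-\mi r_\ell)$, and then invoke the expansions of the conjugate pair from Proposition \ref{prop4} to get both claims by Taylor expansion. You merely spell out details the paper compresses, including the analyticity of $R_m$ on the imaginary axis and the $\theta=\pi$ case, which the paper omits ``for simplicity.''
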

\begin{proof}
We have the general formula
\begin{equation} \label{eq62}
	R'_m(z) = \frac{P_m'(z)P_m(-z) + P_m'(-z)P_m(z)}{P_m^2(-z)}.
\end{equation}
We only prove the result for $\theta=0$, first observing that $P_m(\mi r_\ell) = P_m(-\mi r_\ell)$ if $\pm \mi r_\ell$ are zeros of $M_m(0,z)$, for which easily we deduce $R_m'(\mi r_\ell) = R_m'(-\mi r_\ell)$. Recalling the characterizations of $\hat z_{2\ell}(t,0)$ and $\hat z_{2\ell+1}(t,0)$, the two claims easily follow.
\end{proof}
\begin{lemma} \label{lemma3}
The following holds
\begin{equation*}
	\begin{cases}
		R'_m(\hat z_m(t,0)) = \mathcal{O}(t^2) & \text{~for even~} m,
		\\ R'_m(\hat z_m(t,\pi)) = \mathcal{O}(t^2) & \text{~for odd~} m,
	\end{cases}
\end{equation*}
when $\abs{t} \to 0$.
\end{lemma}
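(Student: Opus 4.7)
The plan is to reduce the claim to a single asymptotic fact about the stability function, namely that $R'_m(z) = \mathcal{O}(1/z^2)$ as $|z| \to \infty$. Once this is in hand, both cases follow immediately by substitution: Propositions \ref{prop4} and \ref{prop5} give $\hat z_m(t,0) = D_m/t + \mathcal{O}(1)$ for even $m$ and $\hat z_m(t,\pi) = E_m/t + \mathcal{O}(1)$ for odd $m$, so the argument of $R'_m$ grows like $1/t$, and therefore $R'_m$ evaluated there is $\mathcal{O}(t^2)$.

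To establish the infinity-decay, I would work directly with the formula \eqref{eq62}, namely
\[
R'_m(z) = \frac{P_m'(z)P_m(-z) + P_m'(-z)P_m(z)}{P_m(-z)^2}.
\]
The denominator has degree $2m$ with leading coefficient $((-1)^m)^2 = 1$ since $P_m$ is monic. The numerator, a priori of degree $2m-1$, has vanishing leading coefficient: the contribution of $P_m'(z)P_m(-z)$ at order $z^{2m-1}$ is $(m z^{m-1})((-1)^m z^m) = (-1)^m m\, z^{2m-1}$, while the contribution of $P_m'(-z)P_m(z)$ is $((-1)^{m-1} m z^{m-1})(z^m) = (-1)^{m-1} m\, z^{2m-1}$, and these cancel. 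Hence the numerator has degree at most $2m-2$ and $R'_m(z) = \mathcal{O}(1/z^2)$ as $|z| \to \infty$.

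There is no real obstacle in this argument; the one thing worth double-checking is the leading-order cancellation in the numerator, which is the algebraic reflection of the fact that $R_m(z)$ is a bidegree-$(m,m)$ rational function with finite limit $R_m(\infty) = (-1)^m$, so that $R_m(z) - R_m(\infty) = \mathcal{O}(1/z)$ and differentiating gives $\mathcal{O}(1/z^2)$. Combining with the explicit asymptotics of $\hat z_m(t, 0)$ (even $m$) and $\hat z_m(t, \pi)$ (odd $m$) from Propositions \ref{prop4} and \ref{prop5} closes the proof.
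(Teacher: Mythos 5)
Your proposal is correct and takes essentially the same route as the paper: the paper's own (one-line) proof likewise combines the asymptotics $\hat z_m(t,0) = \nicefrac{D_m}{t} + \mathcal{O}(1)$ (even $m$) and $\hat z_m(t,\pi) = \nicefrac{E_m}{t} + \mathcal{O}(1)$ (odd $m$) from Propositions \ref{prop4} and \ref{prop5} with formula \eqref{eq62}. You simply make explicit the step the paper leaves implicit, namely the cancellation of the $z^{2m-1}$ terms in the numerator of \eqref{eq62} (equivalently, that $R_m$ has the finite limit $R_m(\infty) = (-1)^m$), which yields $R_m'(z) = \mathcal{O}(z^{-2})$ as $\abs{z} \to \infty$ and hence the claimed $\mathcal{O}(t^2)$ bounds.
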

\begin{proof}
The proof simply relies on the fact that $\hat z_m(t,0) = \nicefrac{D_m}{t} + \mathcal{O}(1)$ for even $m$, and $\hat z_m(t,\pi) = \nicefrac{E_m}{t} + \mathcal{O}(1)$ for odd $m$, and formula \eqref{eq62}.
\end{proof}
\section{Main results} \label{sec_Main}
\begin{lemma} \label{lemma4}
Let $K$ satisfy Assumption \ref{assumption1} and  consider an $m$-stage Gauss Runge-Kutta method satisfying Assumption \ref{assump2}. For every $\sigma_1 > \sigma_0$, there exist constants $\rho_1 > 0$ and $h_1 > 0$ such that for $0 < h \le h_1$ and all $s$ with $\Re s = \sigma_1$ and $\abs{sh} <\rho_1$,
\begin{align*} 
	b^T A^{-1} K\left( \frac{\Delta(e^{-sh})}{h} \right) e^{csh} \frac{e^{-sh}}{1-R_m(\infty) e^{-sh}}
	  &=  K(s) + s^{2m+\mu} \mathcal{O}(h^{2m}) \\ &
+
	\begin{cases}
		s^{2-\mu} \mathcal{O}(h^{2-2\mu}) & \text{~if~~} m = 2,  \\
		s^{m+1} \mathcal{O}(h^{m+1-\mu}) & \text{~if~~} m \ge 3 \text{~and~} m \text{~odd}, \\
		s^{m+1} \mathcal{O}(h^{m+1-\mu}) + s^{m-\mu} \mathcal{O}(h^{m-2\mu}) & \text{~if~~} m \ge 3 \text{~and~} m \text{~even}.
	\end{cases}
\end{align*}
The implied constants in the $\mathcal{O}$-notation are independent of $h$ and $s$.
\end{lemma}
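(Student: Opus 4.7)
The plan is to spectrally decompose the matrix $K(\Delta(e^{-sh})/h)$ using Propositions \ref{prop1} and \ref{prop4}. Setting $\zeta = e^{-sh}$, the condition $R_m(z)\zeta = 1$ becomes $R_m(z) = e^{sh}$, so Proposition \ref{prop4} gives the $m$ eigenvalues of $\Delta(e^{-sh})$ as $\{\hat z_j(sh,0)\}_{j=1}^m$. For $|sh| < \rho_1$ with $\rho_1$ small these are distinct, so Assumption \ref{assump2} ensures diagonalizability; writing $E_j$ for the associated rank-one spectral projectors,
\begin{equation*}
\text{LHS} = \sum_{j=1}^m \alpha_j K(\hat z_j(sh,0)/h), \qquad \alpha_j := b^T A^{-1} E_j \, e^{csh}\cdot \frac{e^{-sh}}{1-R_m(\infty) e^{-sh}}.
\end{equation*}
The residue formula $E_j = \adj(\hat z_j I - \Delta(e^{-sh}))/\prod_{k\neq j}(\hat z_j - \hat z_k)$, together with the identity $\hat z_j'(\zeta) = -1/(\zeta^2 R_m'(\hat z_j))$ coming from $R_m(\hat z_j)\zeta=1$, expresses each $\alpha_j$ in terms of $R_m'$ evaluated at the eigenvalue, opening the door to Lemmas \ref{lemma2} and \ref{lemma3}.

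For the principal mode, Proposition \ref{prop4} gives $\hat z_1(sh,0)/h = s + s^{2m+1}\mathcal{O}(h^{2m})$. A Taylor expansion of $K$ together with the Cauchy bound $|K'(z)| \lesssim |z|^{\mu-1}$ on a ball of radius $\sim \Re s$ about $s$ (valid for $\Re s \geq \sigma_1$ by Assumption \ref{assumption1}) yields $K(\hat z_1/h) = K(s) + s^{2m+\mu}\mathcal{O}(h^{2m})$. I would then show $\alpha_1 = 1 + \mathcal{O}((sh)^{2m})$ by using the classical order $2m$ of the Gauss method: the principal eigenvalue's contribution to the discrete operational symbol reproduces the exact Laplace symbol to this order. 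Multiplying through produces the leading $s^{2m+\mu}\mathcal{O}(h^{2m})$ term.

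For the secondary modes I would separate two situations. For conjugate pairs $(\hat z_{2\ell}, \hat z_{2\ell+1})$ with $\ell = 1,\dots, \lceil m/2\rceil-1$, Proposition \ref{prop4} places them near the imaginary axis with $\Re \hat z_j/h = \delta_\ell \sigma_1 + \mathcal{O}(h)$ bounded away from zero, so Assumption \ref{assumption1} gives $|K(\hat z_j/h)| \lesssim h^{-\mu}$. I would then sum the two contributions: the stage order $m$ forces each individual $\alpha_j$ to be of order $(sh)^m$, and Lemma \ref{lemma2} (stating that $R_m'$ agrees on the pair up to $\mathcal{O}(sh)$) produces one extra order of cancellation, giving $\alpha_{2\ell}+\alpha_{2\ell+1} = \mathcal{O}((sh)^{m+1})$. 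This yields the $s^{m+1}\mathcal{O}(h^{m+1-\mu})$ contribution, which vanishes for $m=2$ since no conjugate pairs exist. For the infinite mode $\hat z_m$, present only for even $m$, Proposition \ref{prop4} gives $\hat z_m/h \sim D_m/(sh^2)$, so $|K(\hat z_m/h)| \lesssim (sh^2)^{-\mu} = s^{-\mu}h^{-2\mu}$, while Lemma \ref{lemma3} combined with the residue expression for $E_m$ yields $\alpha_m = \mathcal{O}((sh)^{m})$; the product is the stated $s^{m-\mu}\mathcal{O}(h^{m-2\mu})$ term.

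The main obstacle will be obtaining the sharp orders of the $\alpha_j$ for the secondary modes. This requires carefully unfolding how the stage-order condition of the Gauss method interacts with the residue structure of $E_j$, and understanding precisely why the degeneration of $R_m'$ described in Lemmas \ref{lemma2}--\ref{lemma3} yields a gain of one power of $sh$ for conjugate pairs yet only a matching-order contribution from the infinite mode (which is then offset by the comparatively large size of $|K(\hat z_m/h)|$). Once these scalar coefficients are under control, the final bound assembles by summing the contributions and invoking the polynomial growth of $K$ in each regime.
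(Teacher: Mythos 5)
Your overall architecture coincides with the paper's: decompose the symbol over the $m$ roots $\hat z_j(sh,0)$ of $R_m(z)=e^{sh}$ (your spectral projectors $E_j$ are exactly the residues the paper computes, after first applying the identity of \cite[Lemma 2.6]{BanjaiLubichMelenk2011} to replace the resolvent of $\Delta(e^{-sh})$ by that of $A^{-1}$ times $\bigl(1-R_m(z)e^{-sh}\bigr)^{-1}$), then treat separately the principal root, the conjugate pairs, and the root escaping to infinity for even $m$, checking in each regime that the argument of $K$ stays in $\Re(\cdot)\ge \sigma_0$. However, there is a genuine gap in your conjugate-pair analysis, and it concerns the central quantitative engine of the proof. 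You claim each individual coefficient satisfies $\alpha_j=\mathcal{O}((sh)^m)$ and that the order $m+1$ is produced by cancellation across the pair via Lemma \ref{lemma2}. The paper's mechanism is different: by the stage-order identity \eqref{eq74} (from \cite[Lemma 2.5]{BanjaiLubichMelenk2011}), with $\alpha=\hat z_{2\ell}(sh)$ and $z=sh$, the term $e^{sh}-R_m(\hat z_{2\ell}(sh))$ vanishes \emph{exactly} because $\hat z_{2\ell}(sh)$ solves $R_m(z)=e^{sh}$; and since $\hat z_{2\ell}(sh)-sh=\mi r_\ell+(\delta_\ell-1)sh+\mathcal{O}((sh)^2)$ is bounded away from zero, dividing costs nothing and \emph{each individual} residue coefficient is already $\mathcal{O}((sh)^{m+1})$, as in \eqref{eq75}. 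No pairing is needed; Lemma \ref{lemma2} enters only to keep $1/R_m'(\hat z_{2\ell}(sh))$ bounded. Your plan never invokes this identity (you only gesture at ``the stage order forces $\alpha_j=\mathcal{O}((sh)^m)$''), so the place where the exponent $m+1$ actually comes from is missing.

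Moreover, your substitute mechanism cannot be repaired to give the stated bound for general $K$: even granting $\alpha_{2\ell}+\alpha_{2\ell+1}=\mathcal{O}((sh)^{m+1})$ with individual $\alpha_j=\mathcal{O}((sh)^m)$, writing $\alpha_{2\ell}K_{2\ell}+\alpha_{2\ell+1}K_{2\ell+1}=\alpha_{2\ell}(K_{2\ell}-K_{2\ell+1})+(\alpha_{2\ell}+\alpha_{2\ell+1})K_{2\ell+1}$, the first term is $\mathcal{O}((sh)^m)\cdot\mathcal{O}(h^{-\mu})=s^m\,\mathcal{O}(h^{m-\mu})$, one order short of the lemma, because $K(\mi r_\ell/h)$ and $K(-\mi r_\ell/h)$ need not nearly agree for a general hyperbolic symbol --- that near-agreement is precisely the additional kernel hypothesis \eqref{eq85} under which Remark \ref{remark2} obtains superconvergence, and the associated coefficient cancellation is only verified numerically there, not available for free in Lemma \ref{lemma4}. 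Two smaller points: your Cauchy estimate $\vert K'(z)\vert\le C\vert z\vert^{\mu-1}$ on a ball of radius $\sim\Re s$ is not available here, since with $\Re s=\sigma_1$ fixed the admissible radius before leaving $\Re(\cdot)\ge\sigma_0$ is $\mathcal{O}(\sigma_1-\sigma_0)$, yielding only $\vert K'\vert\le C\vert s\vert^{\mu}$; and for the infinite mode, bounding $1/R_m'(\hat z_m(sh))$ requires a two-sided asymptotic $\vert R_m'(\hat z_m(sh))\vert\asymp\vert sh\vert^2$, not merely the upper bound of Lemma \ref{lemma3} (a looseness the paper shares, resolved by $R_m'(z)\sim Cz^{-2}$, $C\neq0$, as $\vert z \vert \to\infty$ together with $\hat z_m(sh)\sim D_m/(sh)$).
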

\begin{proof}
Proceeding as in \cite{BanjaiLubichMelenk2011}, we write
\begin{align*}
	b^T A^{-1} & K\left( \frac{\Delta(e^{-sh})}{h} \right) e^{csh} \frac{e^{-sh}}{1-R_m(\infty) e^{-sh}} 
	\\ & = \frac{1}{2\pi i} \int_\Gamma K\left(\frac{z}{h}\right) b^T A^{-1} (z I - \Delta(e^{-sh}))^{-1} e^{csh} \frac{e^{-sh}}{1-R_m(\infty) e^{-sh}} \,\dd\Gamma_z
\end{align*}
where $\Gamma$ is a contour that encloses the spectrum of $\Delta(e^{-sh})$, which is composed by Proposition \ref{prop1} only of the solutions $\{\hat z_j(sh,0)\}_{j=1}^m$ of $R_m(z) = e^{sh}$. For simplicity, in the rest of the proof, we write $\hat z_j(sh)$ for $\hat z_j(sh,0)$. We recall that (see Lemma 2.6 of \cite{BanjaiLubichMelenk2011})
\begin{equation*}
	b^T A^{-1} \left(z I - \Delta(e^{-sh}) \right)^{-1} \frac{1}{1-R_m(\infty) e^{-sh}} = b^T A^{-1} (z I - A^{-1})^{-1}\frac{1}{1-R_m(z)e^{-sh}}.
\end{equation*}
So we have
\begin{align*}
	b^T A^{-1} K\left( \frac{\Delta(e^{-sh})}{h} \right) e^{csh} \frac{e^{-sh}}{1-R_m(\infty) e^{-sh}} = \frac{1}{2\pi i} \int_\Gamma K\left(\frac{z}{h}\right) b^T A^{-1} (z I - A^{-1})^{-1} e^{csh} \frac{e^{-sh}}{1-R_m(z)e^{-sh}} \, \dd \Gamma_z,
\end{align*}
where we can suppose that $\Gamma$ does not include the eigenvalues of $A^{-1}$. The integral is written as
\begin{align*}
	\frac{1}{2\pi i} & \int_\Gamma K\left(\frac{z}{h}\right) b^T A^{-1} (z I - A^{-1})^{-1} e^{csh} \frac{e^{-sh}}{1-R_m(z)e^{-sh}} \, \dd\Gamma_z
	\\ & = -\sum_{j=1}^m K\left( \frac{\hat z_j(sh)}{h} \right) b^T A^{-1} (\hat z_j(sh) I - A^{-1})^{-1} e^{csh} \frac{1}{R_q'(\hat z_j(sh))}.
\end{align*}
Using the characterization of Proposition \ref{prop4}, we calculate the residuals in all the $\{\hat z_j(sh)\}_{j=1}^m$.

\noindent \textbf{The first residual $\hat z_1(sh)$.} \\
Exactly as in \cite{BanjaiLubichMelenk2011}, since $\hat z_1(sh) = s h + \mathcal{O}((sh)^{2m+1})$ we deduce
\begin{equation*} 
	K\left( \frac{\hat z_1(sh)}{h} \right) b^T A^{-1} (\hat z_1(sh) I - A^{-1})^{-1} e^{csh} \frac{1}{R_m'(\hat z_1(sh))} = K(s) + s^{\mu} \mathcal{O}((sh)^{2m}).
\end{equation*}

\noindent \textbf{The pair of residuals $\hat z_{2\ell}(sh)$ and $\hat z_{2\ell+1}(sh)$ for $\ell = 1 , \ldots , \lceil \nicefrac{m}{2} \rceil-1$.} \\
We consider only the case $\hat z_{2\ell}(sh)$ for simplicity, the other is analogous. We recall that from Lemma 2.5 of \cite{BanjaiLubichMelenk2011}, for all $\alpha \in \mathbb{C}$ far from the inverse of the eigenvalues of $A$ it holds
\begin{equation} \label{eq74}
	(\alpha-z)b^T A^{-1} (\alpha I - A^{-1})^{-1} e^{cz} = e^z - R(\alpha) + \alpha b^T (I-\alpha A)^{-1}\mathcal{O}(z^{m+1})
\end{equation}
for $ \abs{z} \to 0$. We use the latter with $\alpha = \hat z_{2 \ell}(sh)$ and $z = sh$. The solutions $\hat z_{2 \ell}(sh)$ are uniformly bounded away from the eigenvalues of $A^{-1}$ by Proposition \ref{prop1}. Observing in our notation that $\alpha - z = \hat z_{2\ell}(sh) - sh = \mi r_\ell + (\delta_\ell-1)sh + \mathcal{O}((sh)^2)$, we conclude that
\begin{equation} \label{eq75}
	b^T A^{-1} (\hat z_{2\ell}(sh) I - A^{-1})^{-1} e^{csh} = b^T (I-\mi r_\ell A)^{-1}\mathcal{O}((sh)^{m+1}) = \mathcal{O}((sh)^{m+1}).
\end{equation}
Using Lemma \ref{lemma2}, and Assumption \ref{assumption1} it holds
\begin{equation*} 
	\left\| K\left( \frac{\hat z_{2 \ell}(sh)}{h} \right)  \frac{1}{R_q'(\hat z_{2\ell}(sh))} \right\|_{\mathcal{B}(X,Y)} \mathcal{O}((sh)^{m+1}) = s^{m+1} \mathcal{O}(h^{m+1-\mu}).
\end{equation*}
In the last step, we were allowed to use Assumption \ref{assumption1} since
\begin{equation*}
	\Re\left( \frac{\hat z_{2\ell}(sh)}{h} \right) = \delta_\ell \Re s + \mathcal{O}(h)
\end{equation*}
and by Proposition \ref{prop4}, $\delta_\ell >1$.

\noindent\textbf{The last residual $\hat z_m(sh)$ for $m$ even.} \\
 By virtue of \eqref{eq74}, we obtain 
\begin{align}
	& (\hat z_m(sh)-sh)b^T A^{-1} (\hat z_m(sh) I - A^{-1})^{-1} e^{csh} = \hat z_m(sh) b^T (I-\hat z_m(sh) A)^{-1}\mathcal{O}((sh)^{m+1}) = \mathcal{O}((sh)^{m+1}).  \label{eq78}
\end{align}
Since $\hat z_m(sh) = \nicefrac{D_m}{sh} + \mathcal{O}(1)$, we can use Assumption \ref{assumption1} for estimating $K$. In fact, the argument of $K$ is $\frac{\hat z_m(sh)}{h}$ whose real part is, up to lower order terms, $\nicefrac{D_m}{\abs{sh}^2} \Re s$ that clearly is in the half plane $\Re(\cdot) > \sigma_0$ for $\abs{sh}$ small enough. The latter observation, together with Lemma \ref{lemma3} and \eqref{eq78} implies 
\begin{align*} 
	\left\| K\left( \frac{\hat z_m(sh)}{h} \right) b^T A^{-1} (\hat z_m(sh) I - A^{-1})^{-1} e^{csh} \frac{1}{R'(\hat z_m(sh))} \right\|_{\mathcal{B}(X,Y)}  = s^{m-\mu} \mathcal{O}(h^{m-2\mu}).
\end{align*}
Combining the three type of residuals, we obtain the claim.
\end{proof}
\begin{remark} \label{remark2}
In some cases an even better result can be shown as we discuss in this remark, and we verify in an important example in the last section.
To show \eqref{eq74}, in \cite[Lemma 2.5]{BanjaiLubichMelenk2011}, it has been observed that using Runge-Kutta with time step $z$ for the problem $y'(s)=y(s)$ with $y(0)=1$ one obtains the vector
\begin{equation*}
	Y_0 = \mathbbm{1} + zAY_0,
\end{equation*}
and the stage order $Y_{0i} = e^{c_i z} + \mathcal{O}(z^{q+1})$, essentially, implies the result. We investigate the implicit constant vector in \eqref{eq74}. Writing
\begin{equation*}
	Y_0= (I-zA)^{-1} \mathbbm{1},
\end{equation*}
we deduce that
\begin{equation*}
	(I-zA)^{-1} \mathbbm{1} - e^{c z} = \mathcal{O}(z^{q+1}).
\end{equation*}
From this we can write explicitly
\begin{equation*}
	(I-zA)^{-1} \mathbbm{1} - e^{c z} = \sum_{j=q+1}^{\infty} z^j \left( A^j \mathbbm{1} - \frac{c^j}{j!}\right),
\end{equation*}
where the first non zero term is $C_{q} = A^{q+1} \mathbbm{1} - \nicefrac{c^{q+1}}{(q+1)!}$ with $c^{q+1} = (c_1^{q+1},\ldots,c_m^{q+1})$. Combining this observation with the proof of \cite[Lemma 2.5]{BanjaiLubichMelenk2011}, we deduce that $C_q$ is the implicit constant vector in \eqref{eq74}. We recall that for Gauss Runge-Kutta $q=m$.
For the pairs of residuals $\hat z_{2\ell}(sh)$ and $\hat z_{2\ell+1}(sh)$ for $\ell = 1 , \ldots , \lceil \nicefrac{m}{2} \rceil-1$, we expect some cancellation,  after using \eqref{eq75}, in the leading term of the sum
\begin{align}
	&\left\| K\left( \frac{\hat z_{2 \ell}(sh)}{h} \right) b^T (I-\mi r_\ell A)^{-1} C_m + K\left( \frac{\hat z_{2 \ell+1}(sh)}{h} \right)b^T (I+\mi r_\ell A)^{-1} C_m\right\|_{\mathcal{B}(X,Y)}  \mathcal{O}\bigl((sh)^{m+1}\bigr), \label{eq84}
\end{align}
when $K$ is such that
\begin{equation} \label{eq85}
	\left\| K\left( \frac{\mi r}{h} \right) + (-1)^m K\left( \frac{- i r}{h} \right)\right\|_{\mathcal{B}(X,Y)} = \mathcal{O}(1),
\end{equation}
for all fixed $r \in \mathbb{R}$ and $h \to 0$. In fact, it has been numerically verified that for $m \in \{3,\ldots,24\}$, for all $r_\ell$ as in Proposition \ref{prop4},
\begin{equation*}
	b^T\left[(I-\mi r_\ell A)^{-1} + (-1)^m (I+\mi r_\ell A)^{-1}\right]C_m = 0.
\end{equation*}
If $K$ is such that \eqref{eq85} holds true, then the error term related to the pair of residuals $\hat z_{2\ell}(sh)$ and $\hat z_{2\ell+1}(sh)$ can bounded by $s^{m+2}\mathcal{O}(h^{m+2-\mu})$ rather than the individual estimates $s^{m+1} \mathcal{O}(h^{m+1-\mu})$.
\end{remark}

We deduce similar results to Lemma \ref{lemma4} in the other two cases: when $\abs{sh}$ is near $\pi$ and when $\abs{sh}$ is bounded away both from $0$ and from $\pi$.
\begin{lemma}[$\abs{sh}$ near $\pi$] \label{lemma5}
Let $K$ satisfy Assumption \ref{assumption1} and consider an $m$-stage Gauss Runge-Kutta method satisfying Assumption \ref{assump2}. For every $\sigma_1 > \sigma_0$, there exist constants $\rho_2 > 0$ and $h_2 > 0$ such that for $0 < h \le h_2$ and all $s$ with
$\Re s = \sigma_1$ and $\rho_2 < \abs{sh} <\pi$,
\begin{align*} 
	& b^T A^{-1} K\left( \frac{\Delta(e^{-sh})}{h} \right) e^{csh} \frac{e^{-sh}}{1-R_m(\infty) e^{-sh}} =	
	\begin{cases} 
		\mathcal{O}(h^{-\mu}) + \mathcal{O}(h^{-1-2\mu}) & \text{~if~~} m \text{~is odd}, \\
		\mathcal{O}(h^{-\mu}) & \text{~if~~} m \text{~is even}.
	\end{cases}
\end{align*}
The implied constants in the $\mathcal{O}$-notation are independent of $h$ and $s$.
\end{lemma}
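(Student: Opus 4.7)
The proof follows the template of Lemma \ref{lemma4}, but applies Proposition \ref{prop5} (with small parameter $t = sh - \mi\pi$) in the extremal subregion where $|sh|$ is close to $\pi$, and Proposition \ref{prop3} (with uniformly bounded coefficients $y_j(\theta),\beta_j(\theta)$) for the remainder of the range $\rho_2 < |sh| < \pi$. The generic (Proposition \ref{prop3}) regime yields $\mathcal{O}(h^{-\mu})$ bounds that are dominated by the extremal regime's bounds, so the worst case—and the claimed estimate—originates from the subregion analysed via Proposition \ref{prop5}.

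First I would reproduce the contour-integral-to-residue calculation from the proof of Lemma \ref{lemma4} verbatim, to obtain
\begin{equation*}
b^T A^{-1} K\!\left(\tfrac{\Delta(e^{-sh})}{h}\right) e^{csh}\, \frac{e^{-sh}}{1 - R_m(\infty)\, e^{-sh}} = -\sum_{j=1}^m K\!\left(\tfrac{\hat z_j}{h}\right) b^T A^{-1}(\hat z_j I - A^{-1})^{-1} e^{csh}\, \frac{1}{R_m'(\hat z_j)},
\end{equation*}
where $\{\hat z_j\}_{j=1}^m$ are the solutions of $R_m(z) = e^{sh}$, now characterised by Proposition \ref{prop5}. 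On the line $\Re s = \sigma_1$, the smallest attainable value of $|t|=|sh-\mi\pi|$ is of order $\sigma_1 h$, approached as $|sh|\to\pi$.

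The residues split into two classes. For each pair $\hat z_{2\ell},\hat z_{2\ell+1} = \pm\mi\rho_\ell + \gamma_\ell t + \mathcal{O}(t^2)$, both solutions have magnitude $\mathcal{O}(1)$ and stay away from $\sigma(A^{-1})$; since $\gamma_\ell > 1$, one has $\Re(\hat z/h) = \gamma_\ell\sigma_1 + \mathcal{O}(|t|^2/h)\geq \sigma_0$ for $h_2$ small enough, so Assumption \ref{assumption1} gives $\|K(\hat z/h)\|_{\mathcal{B}(X,Y)}=\mathcal{O}(h^{-\mu})$. The bracket $b^T A^{-1}(\hat z I - A^{-1})^{-1}e^{csh}$ is $\mathcal{O}(1)$ by Proposition \ref{prop1} and the boundedness of $\hat z$, while Lemma \ref{lemma2} keeps $R_m'(\hat z)$ bounded away from zero. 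Each pair residual thus contributes $\mathcal{O}(h^{-\mu})$.

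The exceptional residual (present only for odd $m$) is $\hat z_m = E_m/t + \mathcal{O}(1)$ with $E_m>0$, so $\Re(\hat z_m/h) = E_m\sigma_1/|t|^2 + \mathcal{O}(1/h)\geq \sigma_0$ for $h_2$ small, giving $\|K(\hat z_m/h)\|_{\mathcal{B}(X,Y)} = \mathcal{O}\bigl((h|t|)^{-\mu}\bigr)$. Applying identity \eqref{eq74} with $\alpha=\hat z_m$ and $z=sh$ (or equivalently expanding $(\hat z_m I - A^{-1})^{-1} = \hat z_m^{-1}I + \mathcal{O}(\hat z_m^{-2})$ for large $\hat z_m$) together with $R_m(\hat z_m)=e^{sh}$, as in the derivation of \eqref{eq78}, yields $b^T A^{-1}(\hat z_m I - A^{-1})^{-1}e^{csh} = \mathcal{O}(|\hat z_m|^{-1}) = \mathcal{O}(|t|)$. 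Lemma \ref{lemma3} gives $1/R_m'(\hat z_m) = \mathcal{O}(|t|^{-2})$. Multiplying, the exceptional residual contributes $\mathcal{O}(h^{-\mu}|t|^{-1-\mu})$; inserting the worst case $|t|\geq \sigma_1 h$ bounds this by $\mathcal{O}(h^{-1-2\mu})$. Summing all contributions gives $\mathcal{O}(h^{-\mu})$ for even $m$ and $\mathcal{O}(h^{-\mu})+\mathcal{O}(h^{-1-2\mu})$ for odd $m$. The main obstacle is the exceptional residual for odd $m$: three factors scale at very different rates in $|t|$, and one must verify that $\Re(\hat z_m/h)\geq \sigma_0$ holds uniformly over the admissible $s$ so that Assumption \ref{assumption1} is applicable—this forces the joint choice of $h_2$ and $\rho_2$.
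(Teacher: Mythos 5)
Your proposal is correct and follows essentially the same route as the paper: the paper's own proof is only a brief sketch that repeats the residue computation of Lemma \ref{lemma4} using the characterization of Proposition \ref{prop5} (small complex parameter $t$ with $\Re t = \sigma_1 h$ and $\abs{\omega h}$ near $\pi$) and distinguishes the paired residuals, each contributing $\mathcal{O}(h^{-\mu})$, from the exceptional residual present only for odd $m$, which gives the $\mathcal{O}(h^{-1-2\mu})$ term --- exactly your decomposition and arithmetic. Your write-up is in fact more detailed than the paper's, with only two cosmetic caveats: the nonvanishing of $R_m'$ at the paired roots ultimately rests on Lemma \ref{lemma7} (Lemma \ref{lemma2} only gives upper bounds, an abuse the paper itself commits), and \eqref{eq74} is not applicable here since $z = sh$ is not small --- your parenthetical direct expansion of $(\hat z_m I - A^{-1})^{-1}$ is the right argument.
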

\begin{proof}
The proof is very similar to that one of Lemma \ref{lemma4}.
We need to estimate the residuals related to the solutions of equation $R_m(z) = e^{sh}$ obtained with the characterization in Proposition \ref{prop5}, where $t = \sigma_1 h$, with the supposition that $\abs{\omega h}$ is sufficiently near to $\pi$,  where we have written $s = \sigma_1 + i \theta$.  We have two type of residuals, the pairs and the last one that appears only for odd $m$.
\end{proof}
\begin{lemma}[$\abs{sh}$ bounded away from $0$ and $\pi$] \label{lemma6}
 Let $K$ satisfy Assumption \ref{assumption1} and let consider an $m$-stage Gauss Runge-Kutta method satisfying Assumption \ref{assump2}. For every $\sigma_1 > \sigma_0$, and for every $\rho_1, \rho_2 > 0$, there exists a constant $h_3 > 0$ such that for $0 < h \le h_3$ and all $s$ with
$\Re s = \sigma_1$ and $\rho_1 < \abs{sh} < \rho_2$,
\begin{equation*} 
	b^T A^{-1} K\left( \frac{\Delta(e^{-sh})}{h} \right) e^{csh} \frac{e^{-sh}}{1-R_m(\infty) e^{-sh}} =  \mathcal{O}(h^{-\mu}).
\end{equation*}
The implied constants in the $\mathcal{O}$-notation are independent of $h$ and $s$.
\end{lemma}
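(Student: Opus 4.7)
The plan is to follow the template of the proof of Lemma \ref{lemma4}: express the left-hand side as a sum of residues at the eigenvalues of $\Delta(e^{-sh})$, which by Proposition \ref{prop1} are exactly the $m$ solutions $\{\hat z_j(sh)\}$ of $R_m(z) = e^{sh}$. Writing $s = \sigma_1 + \mi \omega$ we have $sh = \sigma_1 h + \mi \omega h$, so in the notation of Section \ref{sec_gauss} we solve $R_m(z) = e^{t + \mi \theta}$ with $t = \sigma_1 h$ small and $\theta = \omega h$ confined (up to the complementary slice near $\pm \pi$, which is covered by Lemma \ref{lemma5}) to a compact subset of $[-\pi,\pi]$ disjoint from $\{-\pi, 0, \pi\}$. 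Proposition \ref{prop3} then applies uniformly and yields
\[
\hat z_j(sh) = \mi y_j(\omega h) + \beta_j(\omega h)\sigma_1 h + \mathcal{O}(h^2), \qquad j=1,\ldots,m,
\]
with $y_j$ and $\beta_j > 1$ uniformly bounded in $\omega h$.

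Next I would bound each residue term
\[
K\!\left(\frac{\hat z_j(sh)}{h}\right) b^T A^{-1} (\hat z_j(sh) I - A^{-1})^{-1} e^{csh}\, \frac{1}{R_m'(\hat z_j(sh))}
\]
individually, exactly as in Lemma \ref{lemma4}. Because $\Re(\hat z_j(sh)/h) = \beta_j(\omega h)\sigma_1 + \mathcal{O}(h)$ exceeds $\sigma_0$ for small enough $h$ and $|\hat z_j(sh)/h| = \mathcal{O}(h^{-1})$, Assumption \ref{assumption1} supplies $\|K(\hat z_j(sh)/h)\|_{\mathcal{B}(X,Y)} = \mathcal{O}(h^{-\mu})$. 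The resolvent factor $b^T A^{-1} (\hat z_j(sh) I - A^{-1})^{-1} e^{csh}$ is $\mathcal{O}(1)$ since Proposition \ref{prop1} keeps each $\hat z_j(sh)$ uniformly away from $\sigma(A^{-1})$ on the compact parameter range and $csh$ stays bounded.

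The remaining factor is $1/R_m'(\hat z_j(sh))$. Pointwise, Lemma \ref{lemma7} gives $R_m'(\mi y_j(\omega h)) \ne 0$ (since $|R_m(\mi y_j(\omega h))| = 1$), and a standard compactness plus continuous-dependence argument (the roots of $M_m(\omega h,\cdot)$ remain simple in this regime, so they depend continuously on $\omega h$) upgrades this to a uniform bound $|R_m'(\hat z_j(sh))| \ge C > 0$ for all sufficiently small $h$ and all $s$ in the stated range. Multiplying the three $\mathcal{O}$-estimates then gives $\mathcal{O}(h^{-\mu})$ per residue, and summing over $j$ yields the claim.

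The main obstacle I anticipate is precisely this uniformisation of the lower bound on $|R_m'|$: Lemma \ref{lemma7} is pointwise, so transferring it to a uniform estimate requires verifying that the $\mi y_j(\omega h)$ remain simple zeros of $M_m(\omega h,\cdot)$ and depend continuously on $\omega h$ throughout the compact parameter range, and that the $\mathcal{O}(h)$ perturbation in $\hat z_j(sh)$ does not push it close to a zero of $R_m'$. These ingredients are all available once $\theta$ is bounded away from $\{-\pi,0,\pi\}$, but the bookkeeping must be carried out explicitly.
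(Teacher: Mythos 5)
Your proof follows essentially the same route as the paper's own (very terse) argument: the identical residue decomposition via Proposition \ref{prop1}, with all $m$ residues now of the single generic type from Proposition \ref{prop3}, whose uniformly bounded $y_j(\theta)$ and $\beta_j(\theta) > 1$ give $\Re(\hat z_j(sh)/h) > \sigma_0$ and the uniform bound on $1/\abs{R_m'(\hat z_j(sh))}$ — the uniformisation you flag as the main obstacle is precisely what Proposition \ref{prop3} already packages, since $\beta_j(\theta) = e^{\mi\theta}/R_m'(\mi y_j(\theta))$ being real, greater than $1$, and uniformly bounded is equivalent to the needed uniform lower bound on $\abs{R_m'}$ at the roots. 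One small point to make explicit: when $\mu < 0$, the estimate $\| K(\hat z_j(sh)/h) \|_{\mathcal{B}(X,Y)} = \mathcal{O}(h^{-\mu})$ requires a lower bound $\abs{\hat z_j(sh)} \ge c > 0$ rather than only $\abs{\hat z_j(sh)/h} = \mathcal{O}(h^{-1})$, which holds here because $M_m(\theta,0) = p_0(1-e^{\mi\theta}) \ne 0$ keeps the $y_j(\theta)$ uniformly away from zero when $\theta$ is bounded away from $0$.
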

\begin{proof}
The proof of this Lemma is similar to that one of Lemma \ref{lemma4} and Lemma \ref{lemma5}. The only difference is that we need to estimate the residuals related to the solutions of equation $R_m(z) = e^{sh}$ obtained with the characterization in Proposition \ref{prop3}, taking $t = \sigma_1 h$ and $\theta = \omega h$, where we have written $s = \sigma_1 + i \theta$. We have only one type of residual, whose main term and first order term with respect to $t$ uniformly bounded in terms of $\theta$.
\end{proof}
We are ready to state and prove the main result.
\begin{theorem} \label{teorema1}
Let $K$ satisfy Assumption \ref{assumption1}. Consider an $m$-stage Runge-Kutta method based on the Gauss formulas satisfying Assumption \ref{assump2}. Let
\begin{equation*}
	r >
\begin{cases}
 \maxx\left\{4+\mu,2-\mu\right\} & \text{~if~~} m = 2, \\
   \maxx\left\{2m+\mu,m+1, m+2+\mu, 2m+2\mu+1\right\} & \text{~if~~} m \ge 3 \text{~and~} m \text{~odd}, \\ 
 \maxx\left\{2m+\mu, m+1, m-\mu\right\} & \text{~if~~} m \ge 3 \text{~and~} m \text{~even}.
	\end{cases}
\end{equation*}
Let $g \in C^r(X)$ with $g^{(r+1)} \in L^1_{\loc}(\mathbb{R},X)$, such that $g(0)=g^{(1)}(0)=\ldots=g^{(r-1)}(0)=0$. Then there exists $h_0$ such that for all $0 < h \le h_0$ and $t \in [0,T]$,
\begin{equation*}
	\left\| K(\partial_t^h)g(t) - K(\partial_t)g(t) \right\|_{Y} \apprle h^{p_{m,\mu}} \left( \| g^{(r)}(0) \|_X + \int_0^t \| g^{(r+1)}(\tau) \|_X \, \dd\tau \right),
\end{equation*}
where
\begin{equation*}
p_{m,\mu} =
	\begin{cases}
		\minn\{4,2-2\mu\} & \text{~if~~} m = 2, \\
		\minn\{2m,m+1-\mu\} & \text{~if~~} m \ge 3 \text{~and~} m \text{~odd}, \\
		\minn\{2m,m+1-\mu,m-2\mu\} & \text{~if~~} m \ge 3 \text{~and~} m \text{~even}.
	\end{cases}
\end{equation*}
The implicit constant is independent of $h$ and $g$, but does depend on $m$, $h_0$, $T$, and the constants in Assumption \ref{assumption1}.
\end{theorem}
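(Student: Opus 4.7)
The strategy is to follow the contour-integral framework of \cite{BanjaiLubichMelenk2011}, now inserting the new symbol bounds of Lemmas \ref{lemma4}--\ref{lemma6}. Using that $g$ is causal with Laplace transform $G$, we write
\begin{equation*}
K(\partial_t)g(t) = \frac{1}{2\pi\mi}\int_{\sigma_1-\mi\infty}^{\sigma_1+\mi\infty} e^{st}K(s)G(s)\,\dd s,
\end{equation*}
for any $\sigma_1 > \sigma_0$. A standard manipulation of the generating functions of the Runge-Kutta CQ weights, together with Fourier inversion on the unit circle $\zeta = e^{-sh}$, shows that the discrete values $K(\partial_t^h)g(t_n)$ admit an analogous representation on the truncated contour $\Re s = \sigma_1$, $|\Im s| < \pi/h$, with $K(s)$ replaced by the semidiscrete symbol
\begin{equation*}
K_h(s) := b^T A^{-1}\,K\Bigl(\tfrac{\Delta(e^{-sh})}{h}\Bigr)\,e^{csh}\,\frac{e^{-sh}}{1-R_m(\infty)e^{-sh}}.
\end{equation*}
Exploiting the $2\pi\mi/h$-periodicity of $K_h$ in $s$ to fold the continuous integral onto the same strip (the discarded tail decays faster than any fixed power of $h$ by the regularity of $g$), the error reduces to
\begin{equation*}
K(\partial_t^h)g(t_n) - K(\partial_t)g(t_n) = \frac{1}{2\pi\mi}\int_{\sigma_1-\mi\pi/h}^{\sigma_1+\mi\pi/h} e^{st_n}\bigl[K_h(s)-K(s)\bigr]G(s)\,\dd s + \text{l.o.t.}
\end{equation*}

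Next, we split the vertical segment into three pieces matching the regimes of Lemmas \ref{lemma4}--\ref{lemma6}: $|sh|<\rho_1$ (Lemma \ref{lemma4}), $\rho_1\le |sh|\le \rho_2$ (Lemma \ref{lemma6}), and $\rho_2 < |sh| < \pi$ (Lemma \ref{lemma5}). To exploit these symbol bounds, the regularity of $g$ is converted into decay of $G$: integrating by parts $r+1$ times and using $g^{(j)}(0)=0$ for $j=0,\ldots,r-1$ yields
\begin{equation*}
\|G(s)\|_X \le \frac{C}{|s|^{r+1}}\Bigl(\|g^{(r)}(0)\|_X + \int_0^t \|g^{(r+1)}(\tau)\|_X\,\dd\tau\Bigr), \quad \Re s = \sigma_1.
\end{equation*}
An error term $s^\alpha \mathcal{O}(h^\beta)$ in $K_h(s)-K(s)$ then produces an integrand of size $h^\beta |s|^{\alpha-r-1}$, which is absolutely integrable along $\Re s = \sigma_1$ as soon as $r > \alpha$. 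The lower bounds on $r$ in the theorem are precisely the maxima of all such $\alpha$ appearing across the three lemmas, so every contribution can be integrated out and yields a clean $\mathcal{O}(h^\beta)$.

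Collecting the $\beta$-exponents completes the proof. Lemma \ref{lemma4} supplies $h^{2m}$ and $h^{m+1-\mu}$, plus $h^{m-2\mu}$ for even $m\ge 3$ and $h^{2-2\mu}$ for $m=2$; Lemma \ref{lemma6} contributes $\mathcal{O}(h^{-\mu})$ on a set of length $\mathcal{O}(h^{-1})$ weighted by $|G(s)|=\mathcal{O}(h^{r+1})$, hence only higher-order terms once $r$ is large enough; Lemma \ref{lemma5} gives $\mathcal{O}(h^{-\mu})$ or $\mathcal{O}(h^{-1-2\mu})$ for odd $m$, whose integrability against $|G|$ forces the extra threshold $r > 2m+2\mu+1$ seen in the odd-$m$ case but again produces only higher-order $h$-terms. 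Taking the minimum of the $\beta$-exponents across all three regions recovers $p_{m,\mu}$ as stated, with $h_0 = \min\{h_1,h_2,h_3\}$ from the three lemmas.

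The main obstacle lies in the non-small-$|sh|$ regimes: there $K_h$ does not approximate $K$, so no convergence can be extracted from symbol cancellation, and the full factor $h^{p_{m,\mu}}$ must be purchased from the polynomial decay of $G$. This is the source of the strong regularity assumption and, in the odd-$m$ case, of the threshold $2m+2\mu+1$ in the lower bound on $r$. A secondary subtlety is that the eigenvalue locations $\hat z_j(sh,\theta)$ furnished by Proposition \ref{prop3} must remain uniformly bounded and bounded away from $\sigma(A^{-1})$ as $\theta$ varies over the intermediate band; this uniformity is precisely what Assumption \ref{assump2} secures through Proposition \ref{prop1}, allowing the residue calculus inside Lemmas \ref{lemma4}--\ref{lemma6} to close cleanly.
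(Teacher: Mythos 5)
Your proposal is correct and follows essentially the same route as the paper's proof: Laplace-transforming the error, splitting the contour $\Re s = \sigma_1$ into the three regimes covered by Lemmas \ref{lemma4}--\ref{lemma6}, using periodicity to fold the large-frequency part onto $\vert \Im(sh)\vert \le \pi$ with the $h^{r+1}$ gain from the decay of $G$, and collecting exponents (the paper first treats $g(t)=\nicefrac{t^r}{r!}$, for which $\mathcal{L}g(s)=s^{-r-1}$, and refers to \cite{BanjaiLubichMelenk2011} for general $g$, which is equivalent to your direct bound $\|G(s)\|_X \apprle \vert s\vert^{-r-1}$), so the thresholds on $r$ and the rate $p_{m,\mu}$ come out identically. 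The only loose phrasings are cosmetic: the discarded/folded tail decays like $\mathcal{O}(h^{r-\mu})$, plus $\mathcal{O}(h^{r-2\mu-1})$ near $\vert sh\vert = \pi$ for odd $m$, not faster than every fixed power of $h$, and the odd-$m$ requirement $r > \maxx\{m+2+\mu,\,2m+2\mu+1\}$ arises from demanding that this tail term be dominated by $h^{p_{m,\mu}}$ rather than from absolute integrability --- both points that your final bookkeeping nonetheless handles correctly.
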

\begin{proof} We proceed along the lines of \cite{Lubich1994}, Theorem 3.1. Applying the Laplace transform to the error
\begin{equation*}
	e_h(t) = K(\partial_t^h)g(t) - K(\partial_t)g(t)
\end{equation*}
yields
\begin{equation*}
	\mathcal{L}e_h(s) = \left\{b^T A^{-1} K\left( \frac{\Delta(e^{-sh})}{h} \right) e^{csh} \frac{e^{-sh}}{1-R_m(\infty) e^{-sh}} -  K(s)\right\} \mathcal{L}g(s).
\end{equation*}
Hence, if this expression is integrable along $\sigma_1 + i\mathbb{R}$, for some $\sigma_1 > \sigma_0$, with $\sigma_0$ as in Assumption \ref{assumption1}, we have by the inverse Laplace transform
\begin{align*}
e_h(t) = \frac{1}{2 \pi i} \int_{\sigma_1+i \mathbb{R}} e^{st} \left\{b^T A^{-1} K\left( \frac{\Delta(e^{-sh})}{h} \right) e^{csh} \frac{e^{-sh}}{1-R_m(\infty) e^{-sh}} - K(s)\right\} \mathcal{L}g(s) \, \dd s.
\end{align*}
Note that $\abs{e^{st} } = e^{\sigma_1 t} \le e^{\sigma_1 T}$ along the contour of integration. Let us first consider the special case $g(t) = \nicefrac{t^r}{r!}$, for which $\mathcal{L}g(s) = s^{-r-1}$, and study the integral
\begin{equation*}
	I = \int_{\mathbb{R}} \left\|{\left\{b^T A^{-1} K\left( \frac{\Delta(e^{-sh})}{h} \right) e^{csh} \frac{e^{-sh}}{1-R_m(\infty) e^{-sh}} - K(s)\right\} s^{-r-1}} \right\|_{\mathcal{B}(X,Y)} \dd\omega,
\end{equation*}
with $s=\sigma_1 + i \omega$. We split the integral into three parts:
\begin{align*}
	I & \le I_1 + I_2 + I_3 
	\\ & = \int_{\abs{sh}<\rho_1} \left\|{\left\{b^T A^{-1} K\left( \frac{\Delta(e^{-sh})}{h} \right) e^{csh} \frac{e^{-sh}}{1-R_m(\infty) e^{-sh}} -  K(s)\right\} s^{-r-1}} \right\|_{\mathcal{B}(X,Y)} \dd\omega 
	\\ & +\int_{\abs{sh}\ge \rho_1} \left\|{b^T A^{-1} K\left( \frac{\Delta(e^{-sh})}{h} \right) e^{csh} \frac{e^{-sh}}{1-R_m(\infty) e^{-sh}} s^{-r-1}} \right\|_{\mathcal{B}(X,Y)} \dd\omega
	\\ & +\int_{\abs{sh}\ge \rho_1} \left\| K(s) s^{-r-1} \right\|_{\mathcal{B}(X,Y)} \dd\omega,
\end{align*}
with the constant $\rho_1$ from Lemma \ref{lemma4}. By virtue of Assumption \ref{assumption1}, we obtain, for $r > \mu$
\begin{equation*}
	I_3 \apprle \int_{\abs{sh} \ge \rho_1} \left\vert s \right\vert^{\mu-r-1} \dd\omega = \mathcal{O}(h^{r-\mu}).
\end{equation*}
By periodicity of the exponential function, one may assume that $\vert \Im (s h) \vert = \vert \omega h \vert \le \pi$ in $I_2$, then the integral can be written (as in \cite{Lubich1994}) 
\begin{align*}
	I_2 & \apprle \int_{\abs{wh} \le \pi} \left\| b^T A^{-1} K\left( \frac{\Delta(e^{-sh})}{h} \right) e^{csh} \frac{e^{-sh}}{1-R_m(\infty) e^{-sh}} \right\|_{\mathcal{B}(X,Y)} \sum_{n \ne 0} \left\vert s + \frac{2 \pi i n}{h} \right\vert^{-r-1} \dd\omega
	\\ & \apprle h^{r+1} \int_{\abs{wh} \le \pi} \left\| b^T A^{-1} K\left( \frac{\Delta(e^{-sh})}{h} \right) e^{csh} \frac{e^{-sh}}{1-R_m(\infty) e^{-sh}} \right\|_{\mathcal{B}(X,Y)} \dd\omega.
\end{align*}

Now we distinguish three cases: when $\abs{\omega h}$ is small enough and $m$ is even, when $\abs{\omega h}$ is near enough to $\pi$ for $m$ odd, and the other cases. We take $\rho_1$ and $\rho_2$ from Lemma \ref{lemma4} and Lemma \ref{lemma5} respectively, and we split
\begin{align*}
	I_2  & \apprle h^{r+1} \int_{\abs{\omega h} < \rho_1} \left\| b^T A^{-1} K\left( \frac{\Delta(e^{-sh})}{h} \right) e^{csh} \frac{e^{-sh}}{1-R_m(\infty) e^{-sh}} \right\|_{\mathcal{B}(X,Y)} \dd \omega 
	\\ & +  h^{r+1} \int_{\rho_1 < \abs{\omega h} < \rho_2} \left\| b^T A^{-1} K\left( \frac{\Delta(e^{-sh})}{h} \right) e^{csh} \frac{e^{-sh}}{1-R_m(\infty) e^{-sh}} \right\|_{\mathcal{B}(X,Y)} \dd \omega 
	\\ & +  h^{r+1} \int_{\rho_2 < \abs{\omega h} < \pi} \left\| b^T A^{-1} K\left( \frac{\Delta(e^{-sh})}{h} \right) e^{csh} \frac{e^{-sh}}{1-R_m(\infty) e^{-sh}} \right\|_{\mathcal{B}(X,Y)} \dd \omega
	\\ & = Y_1 + Y_2 + Y_3.
\end{align*}
For the first range, that we take into account only when $m$ is even, we can bound using Lemma \ref{lemma4}
\begin{align*}
	Y_1 & \apprle h^{r+1} \int_{\abs{\omega h} < \rho_1} \left[ \abs{s}^{\mu} + h^{2m} \abs{s}^{\mu+2m} + h^{m+1-\mu} \abs{s}^{m+1} + h^{m-2\mu} \abs{s}^{m-\mu} \right] \dd\omega
	 \\ & = 
	 \begin{cases}
		\mathcal{O}(h^{r+1}) & \mu < -1, \\
		\mathcal{O}(h^{r+1}  \log{h})  & \mu=-1, \\
		\mathcal{O}(h^{r+1-\mu}) & -1 < \mu \le m+1, \\
		\mathcal{O}(h^{r+1+m-2\mu}) & m+1 < \mu.
	\end{cases}
\end{align*}
For the second range, for all $m$, we obtain using Lemma \ref{lemma6}
\begin{equation*}
	Y_2 \apprle h^{r+1-\mu} \int_{\rho_1 < \abs{\omega h} < \rho_2}  \dd\omega = \mathcal{O}(h^{r-\mu}).
\end{equation*}
Finally, for the third range, for $m$ odd, we bound, using Lemma \ref{lemma5},
\begin{equation*}
	Y_3 \apprle h^{r+1} \int_{\rho_2 < \abs{\omega h} < \pi} \left[ h^{-\mu} + h^{-2\mu-1} \right] \dd\omega  = \mathcal{O}(h^{r-\mu}) + \mathcal{O}(h^{r-2\mu-1})
\end{equation*}
It remains to estimate $I_1$. Using again Lemma \ref{lemma4} we have
\begin{align*}
	I_1 & \apprle  \int_{\abs{s h} < \rho_1} h^{2m} \abs{s}^{\mu+2m-r-1}\dd\omega +
	\begin{cases}
	\int_{\abs{s h} < \rho_1} h^{2-2\mu}\abs{s}^{m-\mu-r-1} \dd\omega & \text{~if~~} m = 2, \\
	 	\int_{\abs{s h} < \rho_1} h^{m+1-\mu} \abs{s}^{m-r}  \dd\omega & \text{~if~~} m \ge 3 \text{~and~} m \text{~odd}, \\
	\int_{\abs{s h} < \rho_1} h^{m+1-\mu} \abs{s}^{m-r} + h^{m-2\mu} \abs{s}^{m-\mu-r-1}  \dd\omega & \text{~if~~} m \ge 3 \text{~and~} m \text{~even},
	\end{cases} 
\\ &	=
	 \mathcal{O}(h^{2m}) +
	 \begin{cases}
	 	\mathcal{O}(h^{2-2\mu}) & \text{~if~~} m = 2, \\
		 \mathcal{O}(h^{m+1-\mu})  & \text{~if~~} m \ge 3 \text{~and~} m \text{~odd}, \\
		 \mathcal{O}(h^{m+1-\mu}) + \mathcal{O}(h^{m-2\mu}) & \text{~if~~} m \ge 3 \text{~and~} m \text{~even},
	 \end{cases}
\end{align*}
where we have used the hypothesis that
\begin{equation} \label{rsup}
r > 
	\begin{cases}
		\max\{4+\mu, 2-\mu\} & m = 2, \\
		\max\{\mu+2m, m+1\} & m \ge 3 \text{~and~} m \text{~odd}, \\
		\max\{\mu+2m, m+1, m-\mu\} & m \ge 3 \text{~and~} m \text{~even}. \\
	\end{cases}
\end{equation} 
For even $m$ is easy to see that the error terms from $I_2$ and $I_3$ are always smaller than those from $I_1$, for $r$ satisfying \eqref{rsup}. However, in the case $m$ odd, we have also the terms from $Y_3$, and we need to require also $r > \max\{m+2+\mu,2m+2\mu+1\}$. For general $g$ we can proceed as in \cite{BanjaiLubichMelenk2011}.
\end{proof}
Concluding, we have in general a better order of convergence for odd $m$, on the other hand a lower regularity requirement on $g$ for even $m$. 
\subsection{Numerical example}
Let us consider the scalar case, as in \cite{BanjaiLubich2011}, with the kernel 
\begin{equation} \label{eq111}
	K_{\mu}(s) = \frac{s^{\mu}}{1-e^{-s}}.
\end{equation}
We approximate the convolution $K_{\mu}(\partial_t) g$ by the convolution quadrature based on 2-stage and 3-stage Gauss methods, with
\begin{equation} \label{eq112}
	g(t) = e^{-0.4t} \sin^6(t),
 \end{equation}
 and final computational time $T = 3$. We use this example to illustrate the sharpness of Theorem \ref{teorema1}. When $m=2$ we expect the convergence order $h^{\min\{4,2-2\mu\}}$, when $m=3$, order $h^{\min\{6,4-\mu\}}$. The relative discrete $l^2$ norm is calculated with respect to a reference solution obtained with $N_{t_{ref}}=2048$ time steps. The results in Tables \ref{table1} and \ref{table2}  confirm that the convergence rates we have proved are also optimal, with the exception of $m=3$ and $\mu=1$. However, in the latter case, since for these parameters the kernel satisfies \eqref{eq85}, we notice the superconvergence due to the cancellation phenomena already predicted in Remark \ref{remark2}. In the next section we illustrate an important application where we observe the same superconvergence.

\begin{table}[h]
  \centering
{
\begin{tabular}{|c|c|c|c|c|c|c|c|c|}\hline
 $N_t$ &  $\mu=-1$ & EOC & $\mu=0$ & EOC & $\mu=1$ & EOC \\\hline
$16$ & $1.2e$-$04$ &  & $3.6r$-$03$ &  & $4.2e$-$01$ &    \\
& &3.9& & 2.1 & & 0.0    \\
$32$ & $8.2e$-$06$ &  & $8.6e$-$04$ &  & $4.4e$-$01$ &  \\
& &4.0& & 2.0 & & 0.0   \\
$64$ & $5.2e$-$06$ &  & $2.1e$-$04$ &  & $4.4e$-$01$ &   \\
& &4.0& & 2.0 & & 0.0    \\
$128$ & $3.3e$-$07$ &  & $5.3e$-$05$ &  & $4.4e$-$01$ &   \\
& &3.9& & 2.0 & & 0.0    \\
$256$ & $2.2e$-$08$ &  & $1.3e$-$05$ &  & $4.4e$-$01$ &   \\
\hline
\end{tabular}}
\caption{Runge-Kutta based on Gauss method with stage order $m=2$ for kernel \eqref{eq111} and datum \eqref{eq112}, by varying $\mu \in \left\{-1, 0,1\right\}$.}
\label{table1}
\end{table}
\begin{table}[h]
  \centering
{
\begin{tabular}{|c|c|c|c|c|c|c|c|c|}\hline
 $N_t$ &  $\mu=0$ & EOC & $\mu=\nicefrac{1}{2}$ & EOC & $\mu=1$ & EOC \\\hline
$16$ & $8.8e$-$05$ &  & $8.0e$-$04$ &  & $1.5e$-$02$ &    \\
& &4.2& & 4.2 & & 4.2    \\
$32$ & $4.8e$-$06$ &  & $4.5e$-$05$ &  & $8.1e$-$03$ &  \\
& &4.0& & 3.7 & & 4.0    \\
$64$ & $3.0e$-$07$ &  & $3.5e$-$06$ &  & $4.9e$-$04$ &   \\
& &4.0& & 3.6 & & 3.9    \\
$128$ & $1.9e$-$08$ &  & $3.0e$-$07$ &  & $3.2e$-$05$ &   \\
& &3.9& & 3.4 & & 3.8    \\
$256$ & $1.2e$-$09$ &  & $2.7e$-$08$ &  & $2.4e$-$06$ &   \\
\hline
\end{tabular}
}
  \caption{Runge-Kutta based on Gauss method with stage order $m=3$ for kernel \eqref{eq111} and datum \eqref{eq112}, by varying $\mu \in \left\{0, \nicefrac{1}{2},1\right\}$.}
\label{table2}
\end{table}
\section{Application to acoustic wave propagation} \label{sec_wave}
Let $\Omega \subset \mathbb{R}^d,$ $d=2$ or $d=3$, a bounded Lipschitz domain with boundary $\Gamma = \partial \Omega$. 
 For $\Re s>0$, the single- and double-layer potentials for the Helmholtz equation 
 \begin{equation} \label{eq113}
 -\Delta U(x) + s^2 U(x) = 0, \quad x \in \mathbb{R}^d \setminus \Gamma
 \end{equation}
 are defined by 
\begin{align*}
	S(s) \phi(x) & = \int_\Gamma K(x-y,s) \phi(y) \, \dd\Gamma_y, \quad x \in \mathbb{R}^d \setminus \Gamma
	\\ D(s) \phi(x)  & = \int_\Gamma \partial_{n_y} K(x-y,s) \phi(y) \, \dd\Gamma_y, \quad x \in \mathbb{R}^d \setminus \Gamma
\end{align*}
where $K$ is the fundamental solution of the operator $-\Delta \cdot + s^2 \cdot$. The latter is given by
\begin{equation*}
	K(x,s) = 
	\begin{cases}
			\vspace{0.2cm}
		\frac{1}{4} K_0(s \| x \|) & d=2, \\
		\frac{e^{-s\|x\|}}{4\pi \|x\|} & d=3 \\
	\end{cases}
\end{equation*}
where $K_0$ is the modified Bessel function of second kind of order 0. The single-layer potential is continuous across $\Gamma$, we denote its boundary trace by
\begin{equation*}
	V(s) \phi(x) = \int_\Gamma K(x-y,s) \phi(y) \, \dd\Gamma_y, \quad x \in \Gamma.
\end{equation*}
We recall from \cite{BambergerHaDuong1986} that $V(s)$ is invertible and
\begin{equation*}
	\| V^{-1} (s) \|_{\mathcal{B}(H^{\frac{1}{2}}(\Gamma),H^{-\frac{1}{2}}(\Gamma))} \le M \frac{\abs{s}^2}{\Re s} \quad \text{for~} \Re s \ge \sigma_0 > 0,
\end{equation*}
for a constant $M>0$.
We define the corresponding boundary operator for the double-layer potential as the average of the two traces on $\Gamma$
\begin{equation*}
	K(s) \phi(x) = \frac{1}{2} (\gamma^+ + \gamma^-)D(s) \phi(x), \quad x \in \Gamma.
\end{equation*}
The exterior Dirichlet-to-Neumann operator $DtN^+(s) : H^{\nicefrac{1}{2}}(\Gamma) \to H^{-\nicefrac{1}{2}}(\Gamma)$ is defined as $DtN^+(s)g = \partial_n^+U$ where $U$ solves \eqref{eq113} with the Dirichlet condition $\gamma^+ U = g$ on $\Gamma$. In \cite{BambergerHaDuong1986, LalienaSayas2009} it was proved that 
\begin{equation*}
DtN^+(s) = V^{-1}(s)\left(-\frac{1}{2} + K(s)\right),
\end{equation*}
and that it holds
\begin{equation*}
	\| DtN^+ (s) \|_{\mathcal{B}(H^{\nicefrac{1}{2}}(\Gamma),H^{-\nicefrac{1}{2}}(\Gamma))} \le M \frac{\abs{s}^2}{\Re s} \quad \text{for~ } \, \Re\, s \ge \sigma_0 > 0.
\end{equation*}
However, as proved in \cite{Banjai2014} for the case $\Omega = \mathbb{S}^2$ and argued for general convex domains, the following stronger estimate holds true in some situations
\begin{equation} \label{eq122}
	\| DtN^+ (s) + s \|_{\mathcal{B}(H^{\nicefrac{1}{2}}(\Gamma),H^{-\nicefrac{1}{2}}(\Gamma))} \le C \quad  \text{for~} \Re s \ge \sigma_0 > 0,
\end{equation}
for $C>0$.
In \cite{MelenkRieder2020}, the same result has been proved for $s$ in a sector of the form $\abs{\arg s} \le \nicefrac{\pi}{2} - c$ for $c \in (0, \nicefrac{\pi}{2})$, for all Lipschitz domains $\Omega$. In the same paper, the property
\begin{equation} \label{eq123}
\| V^{-1}(s) - 2s \|_{\mathcal{B}(H^{\nicefrac{1}{2}}(\Gamma),H^{-\nicefrac{1}{2}}(\Gamma))} \le C
\end{equation}
has been proved for $s$ in a sector as before. It is still an open question whether \eqref{eq122} or \eqref{eq123} hold for all Lipschitz domains in a half-plane of the type $\Re s \ge \sigma > 0$. 

Assuming that we have a Lipschitz domain for which \eqref{eq122} or \eqref{eq123} holds in the whole right-half complex plane, then we are able to prove a stronger convergence theorem related to the $DtN^+$ or $V^{-1}$ operators, when the number of stages of the Gauss Runge-Kutta method is odd.
\begin{corollary} \label{corollary1}
Consider an $m$-stage Gauss Runge-Kutta method satisfying Assumption \ref{assump2} with odd $m \in \mathbb{N}$. Moreover, suppose that the Runge-Kutta method is such that
\begin{equation*} 
	b^T\left[(I-\mi r_\ell A)^{-1} + (-1)^m (I+\mi r_\ell A)^{-1}\right]C_m = 0
\end{equation*}
for all $\ell = 1, \ldots, \nicefrac{(m+1)}{2} $, where $r_\ell$ are defined in Proposition \ref{prop4} and $C_m$ in Remark \ref{remark2}.
Let $r >  2m+3$, and let $\Omega$ be a Lipschitz domain with boundary $\Gamma = \partial \Omega$, such that it holds \eqref{eq122} or \eqref{eq123} for all $\Re s \ge \sigma_0 >0$. Let $g \in C^r(H^{\nicefrac{1}{2}}(\Gamma))$ with $g^{(r+1)} \in L^1_{\loc}(\mathbb{R},H^{\nicefrac{1}{2}}(\Gamma))$, such that $g(0)=g^{(1)}(0)=\ldots=g^{(r-1)}(0)=0$. Then, there exists $h_0$ such that for all $0 < h \le h_0$ and $t \in [0,T]$,
\begin{align*}
	\left\| K(\partial^h_t)g(t) - K(\partial_t)g(t) \right\|_{H^{-\nicefrac{1}{2}}(\Gamma)}  \apprle h^{m+1} \left( \| g^{(r)}(0) \|_{H^{\nicefrac{1}{2}}(\Gamma)} + \int_0^t \| g^{(r+1)}(\tau) \|_{H^{\nicefrac{1}{2}}(\Gamma)} \dd\tau \right),
\end{align*}
where $K = DtN^+$, if \eqref{eq122} holds, or $K = V^{-1}$, if \eqref{eq123} holds.
The implicit constant is independent of $h$ and $g$, but does depend on $m$, $h_0$, $T$, $\sigma_0$ and $C$.
\end{corollary}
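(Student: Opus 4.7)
The plan is to split $K$ into an explicit linear polynomial plus a uniformly bounded remainder and then treat each contribution separately. If \eqref{eq122} holds I set $K_1(s) = -s$ and $K_2(s) = K(s)+s$; if \eqref{eq123} holds I set $K_1(s) = 2s$ and $K_2(s) = K(s)-2s$. In both cases $K_2$ satisfies Assumption \ref{assumption1} with the improved growth exponent $\mu_2 = 0$, while $K_1$ is a linear polynomial, hence is covered by Assumption \ref{assumption1} with $\mu_1 = 1$ and a constant bound independent of $s$. By linearity of the operational calculus, $K(\partial_t^h)g - K(\partial_t)g$ splits additively into a $K_1$-error and a $K_2$-error, and I would bound each by $\mathcal{O}(h^{m+1})$.

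For the bounded remainder $K_2$ I would simply invoke Theorem \ref{teorema1}. With $\mu=0$ and $m\ge 3$ odd the theorem gives $p_{m,0} = \min\{2m,m+1\} = m+1$, and its regularity requirement $r > \max\{2m,m+1,m+2,2m+1\} = 2m+1$ is subsumed by the corollary's hypothesis $r > 2m+3$.

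The real work is the $K_1$-error, for which a naive application of Theorem \ref{teorema1} with $\mu_1 = 1$ would only yield order $\min\{2m,m+1-\mu_1\} = m$. Here the plan is to rerun the proof of Lemma \ref{lemma4} with $K_1$ in place of $K$ and to exploit the two-level cancellation anticipated in Remark \ref{remark2}. Because $K_1(s)=cs$ is odd, the kernel cancellation $K_1(\mi r/h) + K_1(-\mi r/h) = 0$ holds identically, so condition \eqref{eq85} is trivially satisfied. Combined with the assumed Runge-Kutta identity $b^T\bigl[(I-\mi r_\ell A)^{-1} + (-1)^m (I+\mi r_\ell A)^{-1}\bigr]C_m = 0$, the expansion $\hat z_{2\ell}(sh,0)/h = \mi r_\ell/h + \delta_\ell s + \mathcal{O}(sh)$ and the estimate \eqref{eq75} imply that the leading-order contribution from each pair $\hat z_{2\ell},\hat z_{2\ell+1}$ vanishes, leaving only the next-order term of size $\mathcal{O}(s^{m+2} h^{m+2-\mu_1}) = \mathcal{O}(s^{m+2} h^{m+1})$. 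Since for odd $m$ there is no exceptional residual at $\theta=0$, the refined analogue of Lemma \ref{lemma4} for $K_1$ reads
\begin{equation*}
	K_1(s) + s^{2m+1}\mathcal{O}\bigl(h^{2m}\bigr) + s^{m+2}\mathcal{O}\bigl(h^{m+1}\bigr).
\end{equation*}
To finish, I would insert this refined expansion into the contour-integration argument in the proof of Theorem \ref{teorema1}. For the integral $I_1$ over $\abs{sh}<\rho_1$ the two error terms contribute $\mathcal{O}(h^{2m})$ and $\mathcal{O}(h^{m+1})$ respectively, provided $r>\max\{2m+1,m+2\}$. For the regimes $Y_2$ and $Y_3$ I would apply Lemmas \ref{lemma6} and \ref{lemma5} without any cancellation, because with $\mu_1=1$ the worst resulting exponent is $\mathcal{O}(h^{r-1-2\mu_1}) = \mathcal{O}(h^{r-3})$, which under $r>2m+3$ is already $\mathcal{O}(h^{m+1})$ or better. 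Combining the $K_1$ and $K_2$ estimates then yields the claim.

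The main obstacle is the bookkeeping for the cancellation step in the $K_1$-analysis: one must expand $K_1(\hat z_{2\ell}/h)\,b^T(I-\mi r_\ell A)^{-1}C_m + K_1(\hat z_{2\ell+1}/h)\,b^T(I+\mi r_\ell A)^{-1}C_m$ to the appropriate order in $sh$, simultaneously invoking the Runge-Kutta identity and the explicit form $K_1(s)=cs$, and verify that the implied constants are uniform in $\ell$ so that the summation over pairs does not disturb the final contour estimate. The rest of the argument is an essentially routine rerun of the proofs of Lemma \ref{lemma4} and Theorem \ref{teorema1}.
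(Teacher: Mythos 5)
Your proposal is correct in substance and reaches the stated rate, but it is organized differently from the paper's own proof, which is a three-line combination: apply Theorem \ref{teorema1} to $K$ itself with $\mu=1$, and upgrade the pair-residual bound via Remark \ref{remark2}, using \eqref{eq122} or \eqref{eq123} to verify the kernel-cancellation condition \eqref{eq85}, so that the pair terms improve from $s^{m+1}\mathcal{O}(h^{m+1-\mu})=s^{m+1}\mathcal{O}(h^{m})$ to $s^{m+2}\mathcal{O}(h^{m+2-\mu})=s^{m+2}\mathcal{O}(h^{m+1})$; the hypothesis $r>2m+3$ is exactly $\max\{2m+\mu,\,m+1,\,m+2+\mu,\,2m+2\mu+1\}$ at $\mu=1$. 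Your splitting $K=K_1+K_2$ with $K_1(s)=cs$ buys two things the paper's route does not make explicit. First, the bounded remainder $K_2$ is handled unconditionally by Theorem \ref{teorema1} with $\mu=0$ (rate $m+1$, regularity $r>2m+1$), so the assumed Runge-Kutta identity is invoked only for the single explicit kernel $cs$, for which the cancellation is an algebraic identity rather than the asymptotic condition \eqref{eq85}. Second --- a genuine payoff --- your route sidesteps a sign slip in \eqref{eq85}: inserting the assumed identity $b^T(I-\mi r_\ell A)^{-1}C_m=-(-1)^m\,b^T(I+\mi r_\ell A)^{-1}C_m$ into \eqref{eq84} produces, for odd $m$, the \emph{sum} $K(\hat z_{2\ell}/h)+K(\hat z_{2\ell+1}/h)$, whereas \eqref{eq85} as printed asks for the \emph{difference}; for $K=-s+\mathcal{O}(1)$ the sum is $\mathcal{O}(s)$ but the difference is $\mathcal{O}(h^{-1})$, so only the sum version is usable, and your odd linear $K_1$ cancels identically in precisely that combination. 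Your exponent bookkeeping checks out ($I_1$ for $K_1$ needs $r>\max\{2m+1,m+2\}$; $Y_3$ contributes $\mathcal{O}(h^{r-3})$), and in fact it shows $r>\max\{2m+1,\,m+4\}$ would suffice, marginally weaker than the stated $r>2m+3$. Two caveats you share with the paper: the pair-cancellation expansion (matching of the $1/R_m'(\hat z_j)$ factors via Lemma \ref{lemma2}, of the ratios $\hat z_j/(\hat z_j-sh)$, and the $\mathcal{O}((sh)^{m+2})$ tail of \eqref{eq74}, each of which lands at the admissible order $s^{m+2}\mathcal{O}(h^{m+1})$) is only sketched, exactly as in Remark \ref{remark2}; and neither argument covers $m=1$, which the corollary's ``odd $m\in\mathbb{N}$'' nominally allows but Theorem \ref{teorema1} as stated excludes, so your restriction to $m\ge 3$ is the honest one. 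Finally, a small typo: $\hat z_{2\ell}(sh,0)/h=\mi r_\ell/h+\delta_\ell s+\mathcal{O}(s^2h)$, not $\mathcal{O}(sh)$; this does not affect the order count.
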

\begin{proof}
The prove simply combines Theorem \ref{teorema1}, Remark \ref{remark2} and hypothesis \eqref{eq122}/\eqref{eq123}. In fact, we are able to gain one extra order of convergence thanks to the cancellation in the error \eqref{eq84} related to the pairs of residuals $\hat z_{2\ell}(sh)$ and $\hat z_{2\ell+1}(sh)$.
\end{proof}
\subsection{Numerical results}
We consider the scattering of an incident wave by the unitary circle centered in the origin and a L-shaped domain, which corners are given by $(1,0.1), (0.1, 0.1), (0.1, 1), (-1, 1), (-1, -1)$ and $(1,-1)$. For the space discretization we employ a piecewise-constant Galerkin boundary element method, with a fixed fine spatial discretization of $128$ points on $\Gamma$. 
In order to make sure that the space discretization does not significantly affect the results, we have computed all the results with a finer space discretization of $256$ points of discretization, this computation gave essentially the same result.
Since the analytic solutions are not known we have estimated the errors by the following
\begin{equation} \label{eq140}
	\text{error}_{N_t} = \left( h \sum_{j=0}^{N_t} \left\| \varphi_{N_t}(t_j) - \varphi_{N_{t_{ref}}}(t_j) \right\|^2_{H^{-\nicefrac{1}{2}}(\Gamma)} \right)^{\nicefrac{1}{2}}
\end{equation}
calculated using the norm equivalence
\begin{equation*}
	\| \varphi \|^2_{H^{-\nicefrac{1}{2}}(\Gamma)} \approx \langle V(1) \varphi , \varphi \rangle_{\Gamma}
\end{equation*}
where $\langle \cdot, \cdot \rangle_{\Gamma}$ is the duality paring between $H^{\nicefrac{1}{2}}$ and $H^{-\nicefrac{1}{2}}$. In \eqref{eq140}, $\varphi_{N_t}$ is the discrete solution obtained by convolution quadrature with time step $h = \nicefrac{T}{N_t}$. The reference solution $\varphi_{N_{t_{ref}}}$ is calculated with a Runge-Kutta 5-stage Radau IIA convolution quadrature, and $N_{t_{ref}} = 210$.

For the first experiment we discretize, only for the circle domain, the boundary integral equation $\varphi = V^{-1}(\partial_t)g$, with the benchmark right hand side
\begin{equation} \label{eq142}
	g(x_1,x_2,t) = \left(1+\sin(x_2)^2\right)t^{15}.
\end{equation}
The final time of investigation is $T=1$. We report the results in Table \ref{table3}, which are coherent with Corollary \ref{corollary1}.
\begin{table}[h]
  \centering
{
\begin{tabular}{|c|c|c|c|c|c|c|c|c|}\hline
 $N_t$ &  $m=2$ & EOC & $ m=3$ & EOC & $m=4$ & EOC & $m=5$ & EOC \\\hline
$6$ & $1.6e$+$01$ &  & $2.6e$-$00$ &  & $9.9e$-$01$ &  & $4.9e$-$02$ &   \\
& &1.0& & 3.6 & & 2.7 & & 5.8   \\
$7$ & $1.4e$+$01$ &  & $1.5e$-$00$ &  & $6.5e$-$01$ &  & $2.0e$-$02$ &   \\
& &0.8& & 3.9 & & 2.6 & & 6.0   \\
$10$ & $1.1e$+$01$ &  & $3.8e$-$01$ &  & $2.6e$-$01$ &  & $2.4e$-$03$ &   \\
& &0.6& & 4.1 & & 2.5 & & 6.0   \\
$14$ & $8.7e$-$00$ &  & $9.6e$-$02$ &  & $1.1e$-$02$ &  & $3.2e$-$04$ &  \\
& &0.5& & 4.1 & & 2.4 & & 5.3   \\
$15$ & $8.4e$-$00$ &  & $7.2e$-$02$ &  & $9.5e$-$02$ &  & $2.2e$-$04$ &   \\
& &0.4& & 4.1 & & 2.3 & & 4.6   \\
$21$ & $7.3e$-$00$ &  & $1.8e$-$02$ &  & $4.3e$-$02$ &  & $4.7e$-$05$ &   \\
\hline
\end{tabular}
}
  \caption{Gauss Runge-Kutta method with stage order $m$ for the discretization of $V^{-1}$ with datum \eqref{eq142}.}
  \label{table3}
\end{table}

For our second experiment, both for the circle and for the L-shaped domain described before, we discretize $\varphi = DtN^+(\partial_t)g$. We consider the right hand side
\begin{equation} \label{second_datum}
	g(x,t) = f\left(\frac{1}{\rho}(t-x \cdot \alpha +A)\right), \quad f(t) = e^{-t^2}, 
\end{equation}
where $\rho = \frac{3}{8}, \alpha = \left[-\nicefrac{1}{\sqrt{2}}, -\nicefrac{1}{\sqrt{2}}\right],$ and $A = -4$.
For this test, we take $T=3$. The results of these numerical experiments, as documented in Table \ref{table4} and \ref{table5}, suggest a convergence order $\mathcal{O}(h^3)$ when using the 3-stage Radau IIA method and a convergence order $\mathcal{O}(h^4)$ using the 3-stage Gauss method. These results are coherent with Theorem 3.2 of \cite{BanjaiLubichMelenk2011} and Corollary \ref{corollary1}, even if at the moment the estimate \eqref{eq122} is not known for the L-shaped domain.
\begin{table}[h]
  \centering
{
\begin{tabular}{|c|c|c|c|c|c|c|c|c|c|c|c|}\hline
 $N_t$ & 15 & & 21 & & 35 & & 42 & & 70 \\\hline
Gauss      & $2.8e$-$02$ &             & $4.6e$-$03$ &              & $4.6e$-$04$ &             & $2.2e$-$04$ &              & $2.6e$-$05$ \\
EOC         &                    & $5.4$ &                     & $4.5$ &                    & $4.2$ &                    & $4.1$ & \\
\hline 
Radau IIA & $1.2e$-$02$ &            & $4.5e$-$03$  &             & $1.0e$-$03$ &             & $5.8e$-$04$ &             & $1.3e$-$04$ \\
EOC         &                    & $2.9$ &                     & $3.0$ &                    & $3.0$ &                    & $3.0$ & \\
\hline 
\end{tabular}
\vspace{0.01cm}
}
\caption{Convergence of the 3-stage Radau IIA and Gauss convolution quadrature methods of the $DtN^+$ operator for the circle, for datum \eqref{second_datum}.}
\label{table4}
\end{table}
\begin{table}[h]
  \centering
{
\begin{tabular}{|c|c|c|c|c|c|c|c|c|c|c|c|}\hline
 $N_t$ & 15 & & 21 & & 35 & & 42 & & 70 \\\hline
Gauss      & $8.2e$-$03$ &             & $1.8e$-$03$ &              & $1.9e$-$04$ &             & $8.6e$-$05$ &              & $1.1e$-$06$ \\
EOC         &                    & $4.6$ &                     & $4.4$ &                    & $4.2$ &                    & $4.1$ & \\
\hline 
Radau IIA & $7.0e$-$03$ &            & $2.5e$-$03$  &             & $5.5e$-$04$ &             & $3.1e$-$04$ &             & $6.8e$-$05$ \\
EOC         &                    & $3.0$ &                     & $3.0$ &                    & $3.0$ &                    & $3.0$ & \\
\hline 
\end{tabular}
\vspace{0.01cm}
}
\caption{Convergence of the 3-stage Radau IIA and Gauss convolution quadrature methods, of the $DtN^+$ operator for the L-shaped domain, for datum \eqref{second_datum}.}
\label{table5}
\end{table}

\section{Conclusion}
In this paper, we have proposed and analyzed a numerical approach for computing integrals of the form \eqref{first_eq} by means of a convolution quadrature based on Gauss Runge-Kutta methods. The analysis performed is based on the localization of zeros of complex polynomials associated to Pad{\' e} approximants of the exponential.
When the number of stages $m$ is even, the convergence rates are worse than those obtained for stiffly accurate Runge-Kutta methods (such as Radau IIA and Lobatto IIIC).On the other hand, when $m$ is odd, Theorem \ref{teorema1} guarantees the same convergence as in the case of CQ based on stiffly accurate Runge-Kutta methods (see \cite[Theorem 4.1]{BanjaiLubich2011}) for hyperbolic kernels satisfying Assumption \ref{assumption1}. In the case of kernels satisfying the refined bound in Remark \ref{remark1}, a better convergence rate holds for stiffly accurate Runge-Kutta methods (see \cite{BanjaiLubichMelenk2011}). In contrast, it does not seem to be possibile to use this more refined bound in the analysis for Gauss Runge-Kutta methods irrespective of the parity of $m$. However, in the odd case, we establish an improvement, even with respect to Radau IIA, for kernels satisfying condition \eqref{eq85}. In particular, we show that in certain situations, including the exterior Dirichlet-to-Neumann map, an extra order of convergence with respect to those based on Radau IIA can occur. This property, for the $DtN^+$ map, heavily relies on inequality \eqref{eq122} that, at the moment, has been proved only when the scattering obstacle is a sphere or the whole right half plane.

\section*{Declaration}
This research was performed while M.F. was visiting L.B. at Heriot-Watt University. The second author was partially supported by MIUR grant Dipartimenti di Eccellenza 2018-2022, CUP E11G18000350001.

\end{document}